\newtheorem{lem}{Lemma}[section]
\newtheorem{thm}{Theorem}[section]
\newtheorem{prop}{Proposition}[section]
\newtheorem{cor}{Corollary}[section]
\newtheorem{obs}{Remark}[section]
\newtheorem{defin}{Definition}[section]
\numberwithin{equation}{section}
\def\k0{\kappa_0}
\def\lgl{\langle}
\def\rgl{\rangle}
\def\bfe{{\mathsf{e}}}
\def\bfE{{\mathsf{E}}}
\def\bfu{{\bf{u}}}
\def\bfx{{\bf{x}}}
\def\bfo{{\bf{0}}}
\def\mR3{{\mathbb{R}^3}}
\begin{document}
\title[Energy Cascades]
{Energy cascades and flux locality in physical scales of the 3D Navier-Stokes equations}
\author{R. Dascaliuc}
\address{Department of Mathematics\\
University of Virginia\\ Charlottesville, VA 22904}
\author{Z. Gruji\'c}
\address{Department of Mathematics\\
University of Virginia\\ Charlottesville, VA 22904}
\date{\today}
\begin{abstract}
Rigorous estimates for the total -- (kinetic) energy plus pressure
-- flux in $\mathbb{R}^3$ are obtained from the three dimensional
Navier-Stokes equations. The bounds are used to establish a
condition -- involving Taylor length scale and the size of the
domain -- sufficient for existence of the inertial range and the
energy cascade in decaying turbulence (zero driving force,
non-increasing global energy). Several manifestations of the
locality of the flux under this condition are obtained. All the
scales involved are {\em actual physical scales} in $\mathbb{R}^3$
and no regularity or homogeneity/scaling assumptions are made.
\end{abstract}
\maketitle

\section{introduction}

One of the main features of Kolmogorov's empirical turbulence theory
\cite{Kol1,Kol2,Kol3} is existence of {\em energy cascade} over a
wide range of length scales, called the {\em inertial range}, where the
dissipation effects are dominated by the transport of energy from
higher to lower scales. Energy cascades have been observed in
physical experiments, but theoretical justification of this
phenomenon using equations of fluid motion, and in particular, the
Navier-Stokes equations (NSE), remains far from being settled.
The technical complexity of the NSE makes it difficult to establish 
conditions under which such cascades can occur.  A particular
problem is the possible lack of regularity of the solutions to
the NSE, and thus choosing the right setting becomes crucial. (For
an overview of various mathematical models of turbulence and the
theory of the NSE, see, e.g., \cite{FMRTbook,Fbook,ES} and
\cite{L-R,CFbook,Tbook1}, respectively.)

\medskip

The first studies in this direction were made in \cite{FMRT}, where
infinite-time averages of the Leray-Hopf solutions in the Fourier
setting were used to establish a sufficient condition for the energy
cascade. This condition, involving Taylor length scale, provided an
inspiration for the sufficient condition (\ref{scales_con_fin})
obtained in section 4. In contrast to \cite{FMRT}, our goal was to
work in {\em physical space}, dealing with actual length scales in
$\mR3$ rather than the Fourier wave numbers.

\medskip

In studying a PDE model, a natural way of introducing a
concept of scale is to measure oscillations, i.e., (distributional)
derivatives of a quantity with respect to the scale.

Considering an $L^1_{loc}$ function $f$ on a ball of radius $2R$,
$B(\bfx_0, 2R)$, the \emph{physical scale $R$} is introduced via
bounds on the distributional derivatives of $f$ where a test
function $\psi$ is a refined -- smooth, non-negative, equal to 1 on
$B(\bfx_0, R)$ and featuring optimal bounds on the derivatives over
the outer $R$-layer -- cut-off function on $B(\bfx_0, 2R)$.
(Uniformity in all scales dictates linearity of the length of the
outer layer in $R$; hence $B(\bfx_0, R+R)$.)

More explicitly,

\[
|(D^\alpha f, \psi)| \le \int_{B(\bfx_0, 2R)} |f| |D^\alpha \psi|
\le \Bigl(c(\alpha) \frac{1}{R^{|\alpha|}} |f| ,
\psi^{\rho(\alpha)}\Bigr)
\]
for some $c(\alpha)>0$ and $\rho(\alpha)$ in $(0,1)$.

(An attempt to introduce a concept of scale via characteristic
functions in place of smooth cut-off functions would
lead to infinite concentration -- delta
functions -- invalidating much of the desired calculus.)

This approach has a similar flavor as introducing the \emph{Fourier scale}
$|\xi|$ via
\[
 \widehat{D^\alpha f} (\xi) = i^{|\alpha|} \xi^\alpha \hat{f}(\xi)
\]
(in the Schwarz space, and then by duality in the space of tempered
distributions).

\medskip

Let $\bfx_0$ be in $B(\bfo,R_0)$ ($R_0$ being the integral scale,
$B(\bfo,2R_0) \subset \Omega$ where $\Omega$ is the global spatial
domain) and $0< R \le R_0$. Define local -- per unit of mass -- (kinetic) energy, $\bfe$ and
enstrophy, $\bfE$, at time $t$, associated with the ball $B(\bfx_0,R)$
by
\[
\bfe_{\bfx_0,R}(t)=\int \frac{1}{2}|\bfu|^2\phi^{2\delta-1} \,d\bfx\;,
\]
\[
\bfE_{\bfx_0,R}(t)=\int |\nabla\otimes \bfu|^2\phi \, d\bfx\;,
\]
where $\phi=\eta \, \psi$ and $\eta$ and $\psi$ are refined cut-off
functions in time and space, respectively (for some $\frac{1}{2} <
\delta < 1$).

A total flux -- (kinetic) energy plus pressure -- through the boundary of
a region $D$ is given by
\[
\int\limits_{\partial D}\biggl(\frac{1}{2}|\bfu|^2+
p\biggr)\bfu\cdot \mathbf{n} \,ds  \, = \,
\int\limits_{D}\bigl[(\bfu \cdot \nabla)\bfu + \nabla p\bigr] \cdot
\bfu \, d\bfx
\]
where $\mathbf{n}$ is an outward normal. Considering the NSE
localized to $B(\bfx_0,2R)$ -- and utilizing $\, \nabla\cdot \bfu
= 0$ -- leads to a localized flux,
\[
\Phi_{\bfx_0,R}(t)=\int \biggl(\frac{1}{2}|\bfu|^2+p\biggr) \bfu
\cdot \nabla \phi \, d\bfx = - \int \bigl[(\bfu \cdot \nabla)\bfu +
\nabla p\bigr] \cdot \bfu \, \phi \, d\bfx.
\]
Since $\psi$ can be constructed such that $\nabla \phi=\eta \,
\nabla \psi$ is oriented along the radial directions of $B(\bfx_0,
2R)$ \emph{toward the center of the ball}, $\Phi_{\bfx_0,R}$
represents the flux {\em into} $B(\bfx_0,R)$ through the layer
between the spheres $S(\bfx_0,2R)$ and $S(\bfx_0,R)$ ($\nabla \phi
\equiv 0$ on $B(\bfx_0, R)$).

A more dynamic physical significance of the sign of $\Phi_{\bfx_0,
R}$ can be seen from the equations: multiplying the NSE by $\psi
\bfu$ and integrating over $B(\bfx_0, 2R)$ (formally, assuming
smoothness) leads to
\begin{equation}\label{loc_tr}
\frac{d}{dt} \int \frac{1}{2} |\bfu|^2 \psi \, d\bfx = \Phi_{\bfx_0,
R} + \nu \int \triangle \bfu \cdot \bfu \, \psi \, d\bfx.
\end{equation}
Plainly, the positivity of $\Phi_{\bfx_0, R}$ contributes to the
increase of the kinetic energy around the point $\bfx_0$ at scale
$R$.

Since the flux consists of both the kinetic and the pressure parts,
a natural question is whether there is a transfer of the kinetic
energy from larger scales into $B(\bfx_0,R)$, or perhaps the
increase is mainly due to the change in pressure. In general, it is
possible that the increase of the kinetic energy around $\bfx_0$ is
due solely to the pressure part; a simple example being
$\bfu=(c,c,c) \, t, \, p=cx_1+cx_2+cx_3$. However, \emph{in physical
situations where the kinetic energy on the (global) spatial domain
$\Omega$ is non-increasing}, e.g., a bounded domain with no-slip
boundary conditions, or the whole space with either decay at
infinity or periodic boundary conditions (here, we are concerned
with the case of decaying turbulence, setting the driving force to
zero), \emph{the increase of the kinetic energy in $B(\bfx_0,R)$ --
and consequently, the positivity of $\Phi_{\bfx_0,R}$ -- implies
local transfer of the kinetic energy from larger scales} simply
because the local kinetic energy is increasing while the global
kinetic energy is non-increasing resulting in decrease of the
kinetic energy in the complement. This is also consistent with the
fact that in the aforementioned scenarios one can project the NSE --
in an appropriate functional space -- to the subspace of
divergence-free functions effectively eliminating the pressure and
revealing that the local flux $\Phi_{\bfx_0,R}$ is indeed driven by
transport/inertial effects rather than the change in the pressure.
(The $\bfu=(c,c,c) \, t, \, p=cx_1 +cx_2+cx_3$ example pertains to a
completely opposite situation, the kinetic energy is simply
uniformly growing over the whole spatial domain.)

Henceforth, following the discussion in the preceding paragraphs --
in the setting of decaying turbulence (zero driving force,
non-increasing global energy) -- the positivity and the negativity
of $\Phi_{\bfx_0, R}$ will be interpreted as a transfer of (kinetic)
energy around the point $\bfx_0$ at scale $R$ toward smaller scales
and a transfer of (kinetic) energy around the point $\bfx_0$ at scale
$R$ toward larger scales, respectively.

Completely analogous definitions hold for shells of radii $R$ and
$2R$.

We also consider finite time averages of each of the aforementioned quantities.

\medskip

Our goal is to obtain a manifestation of the (kinetic) energy
cascade in \emph{physical space}, i.e., formulate a condition on
$B(\bfo,R_0)$ that would imply that the time-averaged energy
transfers/cascades to smaller scales across a range of scales (the
existence of the inertial range).

A key point here is that we do not assume \emph{any homogeneity} of the
flow; hence one can not expect to show that the local fluxes are
positive for each individual ball $B(\bfx, R)$. The best one can
hope for is to prove the positivity of the flux over \emph{some
spatial average}.

We choose to work with a very straightforward spatial average: the
arithmetic mean of the local fluxes -- time-averaged, per unit mass
-- computed over a family of coverings of $B(\bfo,R_0)$, the
so-called optimal coverings.

Let $K_1$ and $K_2$ be two positive integers. A covering
$\{B(\bfx_i,R)\}_{i=1}^n$ of $B(\bfo,R_0)$ is an \emph{optimal
covering} (with parameters $K_1$ and $K_2$) if
\[
 \biggl(\frac{R_0}{R}\biggr)^3 \le n \le K_1
 \biggr(\frac{R_0}{R}\biggr)^3,
\]
and any point $\bfx$ in $B(\bfo,R_0)$ is covered by at most $K_2$
balls $B(\bfx_i,2R)$. (Optimal coverings exist for all large enough
$K_1$ and $K_2$, the critical values depending only on dimension of
the space. In $\mathbb{R}^3$, we can take $K_1=K_2=8$.)

Let $f$ be a sign-varying quantity (e.g., the flux density $ -
[(\bfu \cdot \nabla)\bfu+\nabla p] \cdot \bfu$), and consider the
arithmetic mean of the quantity locally averaged over the (optimal)
covering elements $B(\bfx_i, R)$,
\[
 F_R = \frac{1}{n} \sum_{i=1}^n \frac{1}{R^3} \int_{B(\bfx_i, 2R)} f \,
 \psi_i^\rho \, d\bfx
\]
(for some $0 < \rho \le 1$).

A revealing observation is that $F_{R} \sim \, \mbox{const} \, (R) \
$ for \emph{all} optimal coverings at scale $R$ ($K_1$ and $K_2$ fixed)
indicates there are no significant fluctuations of sign of $f$ at
scales comparable or greater than $R$. In other words, if there are
significant fluctuations of sign of $f$ at scale $R^*$, $F_R$ will
run over a wide range of values while the average is being run over
all permissible optimal coverings (determined by  $K_1$ and $K_2$), for
any $R$ comparable or less than $R^*$.

When there is no change of sign at all, i.e., in the case of a
signed quantity (e.g., the energy density $f=\frac{1}{2}|\bfu|^2, \,
\rho = 2\delta-1$ or the enstrophy density $f=|\nabla \otimes
\bfu|^2, \, \rho=1$), one would then expect that for any scale $R$,
$0 < R \le R_0$, the averages $F_R$ are all comparable to each
other. This is in fact true (an easy proof).

Utilizing the NSE \emph{via the local energy inequality} -- in the
mathematical setting of suitable weak solutions \cite{S, CKN} -- we
establish the positivity and near-constancy (comparable to $\nu \bfE$,
where $\nu$ is the viscosity and $\bfE$ is the average enstrophy over
$B(\bfo,2R_0)\times (0,2T)$) of the averaged flux across a range of
scales under a very simple and natural (in the sense of turbulence
phenomenology) condition; namely, that Taylor micro-scale $\tau_0$
associated with $B(\bfo, R_0)$ is smaller than the integral spatial
scale $R_0$ (cf. (\ref{scales_con_fin})). The larger the gap, the
deeper the inertial range. This condition is reminiscent of the
Poincar\'e inequality on a domain of the corresponding size (see
Remark \ref{Rem_4.2}); moreover, the condition in hand would be easy
to check in physical experiments as the averages involved are very
straightforward. In addition, the length of the time interval $T$ is
consistent with the intrinsic scaling of the model (cf.
(\ref{T_con})).

It is interesting to interpret the cascade in the light of the above observation 
regarding the meaning of near-constancy of optimal cover averages. Essentially,
for any $\tau_0 \le R \le R_0$ (within the inertial range), the flux
density does not experience significant fluctuations of sign at scale
$R$; the significant fluctuations of sign are only possible at the scales
substantially smaller than $\tau_0$, i.e., inside the dissipation
range.

\medskip

The second part of the paper concerns {\em locality} of the flux. It
is believed (see \cite{O}) that the energy flux inside the inertial
range of turbulent flows depends strongly on the flow in nearby
scales, its dependence on the lower and much higher scales being
weak. The theoretical proof of this conjecture remained elusive. The
first quantitative results on fluxes were obtained by early 70's
(see \cite{Kr}). Much later, the authors in \cite{LF} used the NSE
in the Fourier setting to explore locality of scale interactions for
statistical averages, while the investigation in \cite{E} revealed
the locality of filtered energy flux under an assumption that
solutions to the vanishing viscosity Euler's equations saturate a
defining inequality of a suitable Besov space, i.e., under a (weak)
scaling assumption. A more recent work \cite{CCFS} provided a proof
of the locality of the energy flux in the setting of the
Littlewood-Paley decomposition.

In the last section we prove the locality of the energy cascade --
in decaying turbulence -- in the physical space throughout the
inertial range established in Theorem 4.1. In particular,
considering dyadic shells at the scales $2^k R$ ($k$ an integer) in
the physical space, we show that both ultraviolet and infrared
locality propagate \emph{exponentially} in the shell number $k$.

To the best of our knowledge, the condition (\ref{scales_con_fin})
is presently the only condition (in any solution setting) implying
both the existence of the inertial range and the locality of the
energy flux. Moreover, it does not involve any additional regularity
or homogeneity/scaling assumptions on the solutions to the NSE.

\section{preliminaries}
We consider three dimensional incompressible Navier-Stokes equations (NSE)
\begin{equation}\label{inc-nse}
\begin{aligned}
\frac{\partial}{\partial t}\bfu(t,\bfx)-\nu\Delta \bfu(t,\bfx)
+(\bfu(t,\bfx)\cdot\nabla)\bfu(t,\bfx)+\nabla p(t,\bfx)&=0,\\
\nabla\cdot\bfu(t,\bfx)&=0\;
\end{aligned}
\end{equation}
where the space variable $\bfx$ is in $\mathbb{R}^3$ and the time
variable $t$ is in $(0, \infty)$. The vector-valued function $\bfu$
and the scalar-valued function $p$ represent the fluid velocity and
the pressure, respectively, while the constant $\nu$ is the
viscosity of the fluid.

Since our goal is to investigate local fluxes in the physical space,
the class of suitable weak solutions (see \cite{CKN,L-R}) will
provide an appropriate mathematical framework.

\begin{defin} Let $\Omega$ be an open connected set in $\mR3$. We say that
$(\bfu, p)$ is a \emph{suitable weak solution} on $(0, \infty)
\times \Omega$ if
\begin{enumerate}
\item[(a)] $\bfu\in L^{\infty}((0,\infty),L^2(\Omega)^3)\cap L^{2}
((0,\infty),H^1(\Omega)^3)$ and $p\in L^{3/2}((0,\infty)\times\Omega)$;
\item[(b)] the NSE (\ref{inc-nse}) are satisfied in the weak (distributional) sense;
\item[(c)] the local energy inequality is satisfied: for any
$\phi\in\mathcal{D}((0,\infty)\times\Omega)$, $\phi\ge0$ we have
\begin{equation}\label{loc_ene_ineq}
2\nu\iint|\nabla\otimes\bfu|^2\phi\,d\bfx\,dt \le
\iint|\bfu|^2(\partial_t\phi+\nu\Delta\phi)\,d\bfx\,dt
+\iint(|\bfu|^2+2p)\bfu\cdot\nabla\phi\,d\bfx\,dt\;
\end{equation}
\end{enumerate}
\end{defin}
\noindent where $\mathcal{D}((0,\infty)\times\Omega)$ denotes the
space of infinitely differentiable functions with compact support in
$(0, \infty) \times \Omega$.

The existence of the suitable weak solutions in the case where
$\Omega = \mathbb{R}^3$ and the external force is zero, given a
divergence-free initial condition in $L^2$, was first established in
\cite{S}. See also \cite{CKN,L-R} for more general results related
to existence and regularity properties of the suitable weak
solutions.

A solution to the NSE on $(0, \infty) \times \Omega$ is called {\em
regular} if its $H^1$ norm is bounded on $(0,T)$ for any $T$
positive. Given appropriate boundary conditions, this implies that
the solution is infinitely differentiable (in fact, analytic) in
both space and time and so it is a classical physical solution. In
particular, the local energy \emph{equality} holds
((\ref{loc_ene_ineq}) becomes an equality). The smoothness of the
suitable weak solutions to the NSE is still an open problem, and the
best result in this direction reads that the one-dimensional
(parabolic) Hausdorff measure of the singular set in $(0,T) \times
\Omega$ is zero \cite{CKN} (outside the singular set, a suitable
weak solution is infinitely differentiable in the spatial
variables).

%

In what follows, we consider
\begin{equation}\label{omega_ass}
R_0>0\quad\mbox{such that}\quad B(\bfo,3R_0)\subset\Omega\;
\end{equation}
where $B(\bfo,3R_0)$ denotes the ball in $\mR3$ centered at the
origin and with the radius $3R_0$.

Let $1/2\le\delta<1$. Choose $\psi_0\in\mathcal{D}(B(\bfo,2R_0))$
satisfying
\begin{equation}\label{psi0}
0\le\psi_0\le 1,\quad\psi_0=1\ \mbox{on}\ B(\bfo,R_0),
\quad\frac{|\nabla\psi_0|}{\psi_0^{\delta}}\le\frac{C_0}{R_0}, \quad
\frac{|\triangle\psi_0|}{\psi_0^{2\delta-1}}\le\frac{C_0}{R_0^2}\;.
\end{equation}
 For a $T>0$ (to be chosen later), $\bfx_0\in B(\bfo,R_0)$ and $0<R\le R_0$ define
$\phi=\phi_{\bfx_0,T,R}(t,\bfx)=\eta(t)\psi(\bfx)$ to be used in the
local energy inequality (\ref{loc_ene_ineq}) where $\eta=\eta_T(t)$
and $\psi=\psi_{\bfx_0,R}(\bfx)$ are refined cut-off functions
satisfying the following conditions,
\begin{equation}\label{eta_def}
\eta\in\mathcal{D}(0,2T),\quad 0\le\eta\le1,\quad\eta=1\ \mbox{on}\
(T/4,5T/4),\quad\frac{|\eta'|}{\eta^{\delta}}\le\frac{C_0}{T}\; ;
\end{equation}
if $B(\bfx_0,R)\subset B(\bfo,R_0)$, then
$\psi\in\mathcal{D}(B(\bfx_0,2R))$ with
\begin{equation}\label{psi_def}\begin{aligned}
\ 0\le\psi\le\psi_0,\quad\psi=1\ \mbox{on}\ B(\bfx_0,R)\cap
B(\bfo,R_0),
\quad\frac{|\nabla\psi|}{\psi^{\delta}}\le\frac{C_0}{R}, \
\frac{|\triangle\psi|}{\psi^{2\delta-1}}\le\frac{C_0}{R^2}\;,
\end{aligned}
\end{equation}
and if  $B(\bfx_0,R)\not\subset B(\bfo,R_0)$, then
$\psi\in\mathcal{D}(B(\bfo,2R_0))$ with $\psi=1\ \mbox{on}\ B(\bfx_0,R)
\cap B(\bfo,R_0)$ satisfying, in addition to (\ref{psi_def}), the
following:
\begin{equation}\label{psi_def_add1}
\begin{aligned}
&
\psi=\psi_0\ \mbox{on the part of the cone in}\ \mR3\ \mbox{centered at zero and passing through}\\
& S(\bfo,R_0)\cap B(\bfx_0,R)\ \mbox{between}\  S(\bfo,R_0)\
\mbox{and}\ S(\bfo,2R_0)
\end{aligned}
\end{equation}
and
\begin{equation}\label{psi_def_add2}
\begin{aligned}
&
\psi=0\ \mbox{on}\ B(\bfo,R_0)\setminus B(\bfx_0,2R)\ \mbox{and outside the part of the cone in}\ \mR3\\
 &
 \mbox{centered at zero and passing through}\ S(\bfo,R_0)\cap B(\bfx_0,2R)\\
 &
 \mbox{between}\  S(\bfo,R_0)\ \mbox{and}\ S(\bfo,2R_0).
\end{aligned}
\end{equation}

Figure \ref{ball_fig} illustrates the definition of $\psi$ in the case $B(\bfx_0,R)$ is not
 entirely contained in
$B(\bfo,R_0)$.

\begin{figure}
  \centerline{\includegraphics[scale=1, viewport=188 449 493 669, clip] {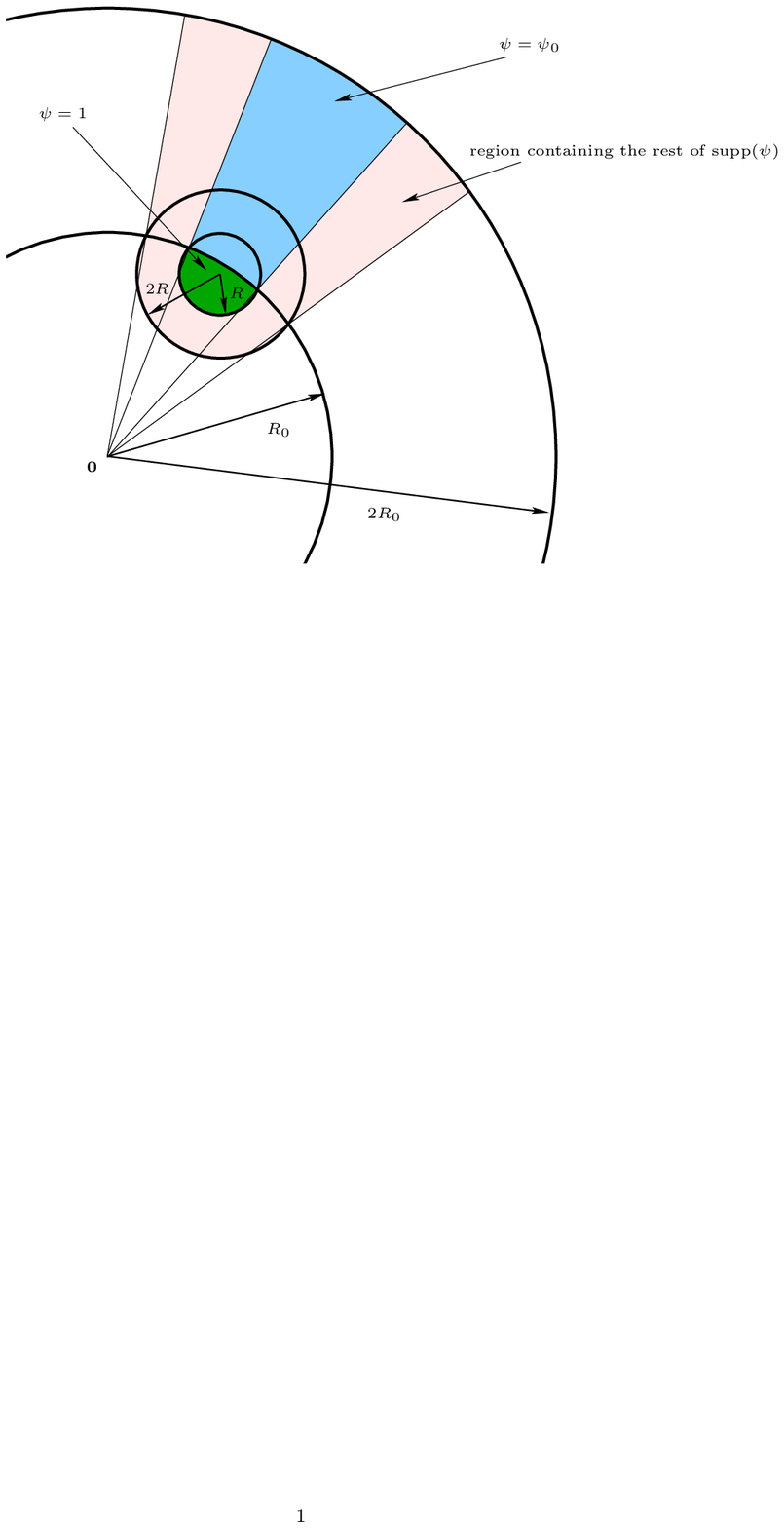}}
  \caption{Regions of supp$(\psi)$ in the case $B(\bfx_0,R)\not\subset B(\bfo,R_0)$,
  cross-section.}
  \label{ball_fig}
\end{figure}

\begin{obs}{\em
The additional conditions on the boundary elements
(\ref{psi_def_add1}) and (\ref{psi_def_add2}) are necessary to
obtain the lower bound on the fluxes in terms of the same version of
the localized enstrophy $E$ in Theorems \ref{balls_thm} and
\ref{shells_thm} (see Remarks \ref{E'_rem1} and \ref{E'_rem3}).
}\end{obs}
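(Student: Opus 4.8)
The statement asserts that the boundary modifications (\ref{psi_def_add1})--(\ref{psi_def_add2}) are precisely what is needed so that the flux lower bound in Theorems \ref{balls_thm} and \ref{shells_thm} can be phrased through the \emph{same} localized enstrophy $\bfE_{\bfx_0,R}=\int|\nabla\otimes\bfu|^2\phi\,d\bfx$ rather than through an inflated variant picking up contributions near $S(\bfo,R_0)$. The plan is to trace the flux bound back to the local energy inequality (\ref{loc_ene_ineq}) and to isolate exactly where the choice of $\psi$ on the annulus $R_0\le|\bfx|\le 2R_0$ enters, then argue \emph{necessity} by exhibiting what breaks for the natural alternative cutoff.

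First I would rewrite (\ref{loc_ene_ineq}) so that the flux term $\iint(|\bfu|^2+2p)\,\bfu\cdot\nabla\phi=2\int\Phi_{\bfx_0,R}\,dt$ stands alone, obtaining
\[ \int \Phi_{\bfx_0,R}\,dt \;\ge\; \nu\iint|\nabla\otimes\bfu|^2\phi\,d\bfx\,dt \;-\; \frac12\iint|\bfu|^2\bigl(\partial_t\phi+\nu\Delta\phi\bigr)\,d\bfx\,dt . \]
A lower bound in terms of $\nu\bfE_{\bfx_0,R}$ therefore requires (i) that the surviving enstrophy carry the fixed weight $\phi=\eta\,\psi$, and (ii) that the correction integral be dominated by the same $\phi$-weighted energy at the correct scale. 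For an interior ball both are immediate from the scale-respecting estimates $|\nabla\psi|/\psi^{\delta}\le C_0/R$ and $|\triangle\psi|/\psi^{2\delta-1}\le C_0/R^2$ in (\ref{psi_def}). The entire difficulty is at the boundary balls, where $B(\bfx_0,R)$ protrudes past $S(\bfo,R_0)$.

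The crux is that a naive cutoff supported only in $B(\bfx_0,2R)\cap B(\bfo,R_0)$ would force $\psi$ to drop from $1$ to $0$ across the portion of $S(\bfo,R_0)$ cut out by the ball, concentrating a spurious gradient and Laplacian there. That extra $\nabla\psi$ would contaminate the flux integrand with a boundary contribution unrelated to the genuine transport, and the extra $\triangle\psi$ could not be absorbed at scale $R$, so the enstrophy appearing in the bound would be a strictly larger, modified quantity rather than $\bfE_{\bfx_0,R}$. Conditions (\ref{psi_def_add1})--(\ref{psi_def_add2}) remove this: they continue $\psi$ radially outward as the global cutoff $\psi_0$ along the cone through $S(\bfo,R_0)\cap B(\bfx_0,R)$ and force it to vanish elsewhere in the annulus, so on that cone the derivative estimates are inherited from (\ref{psi0}) \emph{at scale $R_0$}. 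Consequently $\phi$ satisfies the bounds of (\ref{psi_def}) everywhere, the correction integral is controlled by the $\phi$-weighted energy, and the enstrophy that remains is exactly the version used in the theorems.

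I would carry this out by verifying the three estimates of (\ref{psi_def}) across the three regions separately -- the outer $R$-layer of $B(\bfx_0,2R)$, the cone annulus where $\psi=\psi_0$, and the transition between them -- and then checking in the flux identity that the only surviving radial gradient is oriented inward, so the sign interpretation of $\Phi_{\bfx_0,R}$ is preserved for boundary balls as well. The main obstacle I anticipate is the transition region, where $\psi$ must interpolate between the two scales $R$ and $R_0$ while still obeying $|\nabla\psi|/\psi^{\delta}\le C_0/R$ and $0\le\psi\le\psi_0$ simultaneously; constructing such a $\psi$ and confirming these competing constraints are compatible is the delicate geometric point, and is presumably where the detailed bookkeeping of Remarks \ref{E'_rem1} and \ref{E'_rem3} is needed.
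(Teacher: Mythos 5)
Your proposal misidentifies the mechanism that the remark is actually about. The paper's justification (spelled out in Remarks \ref{E'_rem1} and \ref{E'_rem3}) has nothing to do with the derivative bounds on an individual boundary test function: it is about the \emph{ensemble covering estimate} (\ref{E_R_E_ineq}), $\bfE_R\ge\frac{1}{K_1}\bfE$. The point is that $\bfE$ is weighted by $\phi_0=\eta\psi_0$, whose support extends over the whole annulus between $S(\bfo,R_0)$ and $S(\bfo,2R_0)$. To bound the ensemble average $\bfE_R$ from below by $\frac{1}{K_1}\bfE$ one needs $\sum_i\psi_i\ge\psi_0$ on all of $\mathrm{supp}\,\psi_0$, and this is precisely what the cone conditions (\ref{psi_def_add1})--(\ref{psi_def_add2}) buy: the boundary test functions coincide with $\psi_0$ on cones which, collectively (because the balls $B(\bfx_i,R)$ cover $B(\bfo,R_0)$ and hence their traces cover $S(\bfo,R_0)$), cover the annulus. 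Without these conditions the $\psi_i$ are supported in $B(\bfx_i,2R)$, the sum $\sum_i\psi_i$ only dominates the indicator of $B(\bfo,R_0)$, and the estimate degrades to $\bfE_R\ge\frac{1}{K_1}E'$, where $E'$ is the \emph{unweighted} enstrophy over $B(\bfo,R_0)$. Since $E'\le\bfE$ while the upper bound (\ref{E_R_le_E_ineq}) and the Taylor scale $\tau_0=(\bfe/\bfE)^{1/2}$ still involve $\bfE$, the two-sided bound (\ref{ener_casc}) would then squeeze $\Psi_R$ between multiples of two \emph{different} enstrophies --- that mismatch is what the phrase ``the same version of the localized enstrophy'' refers to.

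Your central claim --- that dropping (\ref{psi_def_add1})--(\ref{psi_def_add2}) forces the cutoff ``to drop from $1$ to $0$ across $S(\bfo,R_0)$,'' concentrating a spurious gradient and Laplacian there --- is incorrect. Condition (\ref{psi_def}) alone permits a perfectly smooth $\psi\in\mathcal{D}(B(\bfx_0,2R))$, equal to $1$ on $B(\bfx_0,R)\cap B(\bfo,R_0)$ and transitioning to $0$ over the outer $R$-layer of $B(\bfx_0,2R)$, with exactly the prescribed scale-$R$ bounds $|\nabla\psi|/\psi^{\delta}\le C_0/R$ and $|\triangle\psi|/\psi^{2\delta-1}\le C_0/R^2$; the individual identity (\ref{ener_eq1}) and the pointwise absorption of $\partial_t\phi_i+\nu\Delta\phi_i$ go through verbatim. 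Nothing fails at the level of a single ball, which is why your plan of ``verifying the three estimates of (\ref{psi_def}) across the three regions'' cannot establish the remark: the necessity of the extra conditions is only visible after summing over the covering and comparing with the $\psi_0$-weighted global quantity. (Note also that your opening sentence has the comparison backwards: the alternative lower bound is in terms of the \emph{smaller}, unweighted $E'$, not an ``inflated variant.'')
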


\section{Localized Energy, Enstrophy and Flux; Ensemble Averages}

Let $\bfx_0\in B(\bfo,R_0)$ and $0<R\le R_0$. Define localized
energy, $\bfe$, and enstrophy, $\bfE$, at time $t$ -- all per unit of mass -- associated with
$B(\bfx_0,R)$ by
\begin{equation}\label{enerdef}
\bfe_{\bfx_0,R}(t)=\int \frac{1}{2}|\bfu|^2\phi^{2\delta-1}\,d\bfx\;,
\end{equation}
\begin{equation}\label{enstdef}
\bfE_{\bfx_0,R}(t)=\int |\nabla\otimes\bfu|^2\phi\,d\bfx\;
\end{equation}
(for some $\frac{1}{2} < \delta < 1$).

The total -- (kinetic) energy plus pressure -- flux through sphere
$S(\bfx_0,R)$ is given by
\[\int\limits_{S(\bfx_0,R)}(\frac{1}{2}|\bfu|^2+ p)\bfu\cdot{\bf{n}}\,ds=
\int\limits_{B(\bfx_0,R)}[(\bfu\cdot\nabla)\bfu + \nabla p]
\cdot\bfu\,d\bfx\;
\] where ${\bf{n}}$ is an outward normal. Considering the NSE localized
to $B(\bfx_0,R)$ leads to a localized version of the flux,
\begin{equation}\label{fluxdef}
\Phi_{\bfx_0,R}(t)=\int
(\frac{1}{2}|\bfu|^2+p)\bfu\cdot\nabla\phi\,d\bfx = - \int \bigl[(\bfu
\cdot \nabla)\bfu + \nabla p\bigr] \cdot \bfu \, \phi \, d\bfx
\end{equation}
where $\phi=\eta\psi$ with $\eta$ and $\psi$ as in
(\ref{eta_def}-\ref{psi_def}). Since $\psi$ can be constructed such
that $\nabla\phi=\eta\nabla \psi$ is oriented along the radial
directions of $B(\bfx_0,R)$ towards the center of the ball $\bfx_0$,
$\Phi(\bfx_0,R)$ represents the flux {\em into} $B(\bfx_0,R)$
through the layer between the spheres $S(\bfx_0,2R)$ and
$S(\bfx_0,R)$ (in the case of the boundary elements satisfying the
additional hypotheses (\ref{psi_def_add1}) and (\ref{psi_def_add2}),
$\psi$ is almost radial and the gradient still points inward).

For a quantity $\Theta_{\bfx,R}(t)$, $t\in[0,2T]$ and a covering
$\{B(\bfx_i,R)\}_{i=1,n}$ of $B(\bfo,R_0)$ define a \emph{time-space
ensemble average}
\begin{equation}
\lgl\Theta\rgl_R=\frac{1}{T}\int
\frac{1}{n}\sum\limits_{i=1}^{n}
\frac{1}{R^3}\Theta_{\bfx_i,R}(t)\,dt\;.
\end{equation}

Denote by
\begin{equation}\label{e_R_def}
\bfe_R=\lgl \bfe_{\bfx,R}(t)\rgl_R\;,
\end{equation}
\begin{equation}\label{E_R_def}
\bfE_R=\lgl \bfE_{\bfx,R}(t)\rgl_R\;,
\end{equation}
\begin{equation}\label{Phi_R_def}
\Phi_R=\lgl \Phi_{\bfx,R}(t)\rgl_R\;,
\end{equation}
the averaged localized energy, enstrophy and inward-directed
flux over balls of radius $R$ covering $B(\bfo,R_0)$.

Also, introduce the time-space average of the localized energy on
$B(\bfo,R_0)$,
\begin{equation}\label{e_def}
\bfe=\frac{1}{T}\int
\frac{1}{R_0^3}\bfe_{\bfo,R_0}(t)\,dt=\frac{1}{T}\frac{1}{R_0^3}\iint \frac{1}{2}|\bfu|^2
\phi_0^{2\delta-1}\,d\bfx\,dt\;
\end{equation}
and the time-space average of the localized enstrophy on $B(\bfo,R_0)$,
\begin{equation}\label{E_def}
{\bfE}=\frac{1}{T}\int
\frac{1}{R_0^3}\bfE_{\bfo,R_0}(t)\,dt=\frac{1}{T}\frac{1}{R_0^3}
\iint |\nabla\otimes\bfu|^2\phi_0\,d\bfx\,dt\;
\end{equation}
where
\begin{equation}\label{phi0}
\phi_0(t,\bfx)=\eta(t)\psi_0(\bfx)
\end{equation}
with $\psi_0$ defined in (\ref{psi0}).

Finally, define Taylor length scale associated with $B(\bfo,R_0)$ by
\begin{equation}\label{tau_def}
\tau_0=\left(\frac{\bfe}{\bfE}\right)^{1/2}\;.
\end{equation}

Note that the possible lack of regularity may produce additional
loss of energy, resulting in {\em anomalous} energy dissipation and
the loss of flux leading to the strict inequality in
(\ref{loc_ene_ineq}). Let us mention here that in the turbulence
literature the term `anomalous dissipation' is usually utilized in
the context of the possible energy dissipation due to the (possible)
singularities in the 3D Euler equations (the observation originally
made by Onsager); for rigorous results on Onsager's conjecture on
the energy conservation in the Euler equations see, e.g.,
\cite{CET}, and a recent work \cite{CCFS}.

Denote by $\Phi_{\bfx_0,R}^{\infty}$ the loss of flux due to
possible singularities in $[0,2T]\times B(\bfx_0,2R)$,
\begin{equation}\label{ener_eq1}
\begin{aligned}
&\iint(\frac{1}{2}|\bfu|^2+p)\bfu\cdot\nabla\phi\,d\bfx\,dt\,- \Phi_{\bfx_0,R}^{\infty}\\
&= \nu\iint|\nabla\otimes\bfu|^2\phi\,d\bfx\,dt\, -
\frac{1}{2}\iint|\bfu|^2(\partial_t\phi+\nu\Delta\phi)\,d\bfx\,dt\;
\end{aligned}
\end{equation}
where $\phi=\eta\psi$ with $\eta$ and $\psi$ as in (\ref{eta_def}) and
(\ref{psi_def}-\ref{psi_def_add2}). In particular, denote by
$\Phi_{\infty}=\Phi_{\bfo,R_0}^{\infty}$ the loss of flux due to
singularities in $[0,2T]\times B(\bfo,2R_0)$.

We will also consider the time-space ensemble averages of these {\em
anomalous} fluxes,

\begin{equation}\label{Phi_R_inf_def}
\Phi_R^{\infty}=\frac{1}{n}\sum\limits_{i=1}^{n}
\frac{1}{T}\frac{1}{R^3}\Phi_{\bfx_i,R}^{\infty}\;.
\end{equation}

Note that due to (\ref{loc_ene_ineq}), all the anomalous fluxes are
nonnegative,
\begin{equation}
\Phi_{\bfx_0,R}^{\infty}\ge0,\qquad\Phi_{\infty}\ge0,\qquad\Phi_R^{\infty}\ge0\;;
\end{equation}
they are all zero provided the equality holds in
(\ref{loc_ene_ineq}) inside $[0,2T]\times B(\bfo,2R_0)$. In particular,
the anomalous fluxes are all zero provided the solution in view is
{\em regular} on $[0,2T]\times B(\bfo,2R_0)$.

Consequently, the total localized flux into $B(\bfx_0 ,R)$ over
interval $[0,2T]$, including the (loss of) flux due to the possible
loss of regularity, is
\begin{equation}\label{Psi_R_x0_def}
\Psi_{\bfx_0,R}=\int{\Phi_{\bfx_0,R}(t)\,dt}-\Phi_{\bfx_0,R}^{\infty}
\end{equation}
and the time-space ensemble average of this flux 
at scales $R$ over time $T$ is
\begin{equation}\label{Psi_R_def}
\Psi_R=\Phi_R-\Phi_{R}^{\infty}\;.
\end{equation}
We will refer to $\Psi_{\bfx_0,R}$ and $\Psi_R$ as the {\em
modified} flux over $[0,2T]$ into $B(\bfx_0,R)$ and the (time-space
ensemble) averaged  {\em modified} flux at the scale $R$,
respectively.

Let $K_1,K_2>1$ be two positive integers (independent of $R,R_0$,
and any of the parameters of the NSE).

\begin{defin}
We say that a covering of $B(\bfo,R_0)$  by $n$ balls of radius $R$
is {\em optimal} if
\begin{equation}\label{n_con1}
\left(\frac{R_0}{R}\right)^3\le n\le K_1\left(\frac{R_0}{R}\right)^3;
\end{equation}
\begin{equation}\label{n_con2}
\mbox{any}\  \bfx\in B(\bfo,R_0)\  \mbox{is covered by at most}\ K_2 \
\mbox{balls}\ B(\bfx_i,2R)\,.
\end{equation}
\end{defin}

Note that optimal coverings exist for any $0<R\le R_0$ provided
$K_1$ and $K_2$ are large enough. In fact, the choice of $K_1$ and
$K_2$ depends only on dimension of the space; in $\mR3$ we can
choose $K_1=K_2=8$.

Henceforth, we assume that the averages $\lgl\cdot\rgl_R$ are taken
with respect to optimal coverings.

\begin{lem}\label{anom_flux_lem}
If the covering $\{B(\bfx_i,R)\}_{i=1,n}$ of $B(\bfo,R_0)$ is optimal then
\begin{equation}\label{Phi_infty_est}
\Phi_R^{\infty}\le K\frac{1}{T}\frac{1}{R_0^3}\Phi_{\infty}\;
\end{equation}
where $K>0$ is a constant depending only on $K_2$ and dimension of
the space $\mR3$.
\end{lem}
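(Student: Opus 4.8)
The plan is to recognize the anomalous flux as the value of a single linear functional — the defect in the local energy inequality — evaluated on the various cut-offs, and then to reduce everything to a purely geometric comparison of those cut-offs. Concretely, for $\phi\in\mathcal{D}((0,\infty)\times\Omega)$ set
\[
\mathcal{L}(\phi)=\iint\Bigl(\tfrac{1}{2}|\bfu|^2+p\Bigr)\bfu\cdot\nabla\phi\,d\bfx\,dt-\nu\iint|\nabla\otimes\bfu|^2\phi\,d\bfx\,dt+\tfrac{1}{2}\iint|\bfu|^2(\partial_t\phi+\nu\Delta\phi)\,d\bfx\,dt.
\]
By the definition (\ref{ener_eq1}) we have $\Phi_{\bfx_i,R}^{\infty}=\mathcal{L}(\phi_{\bfx_i,T,R})$ and $\Phi_{\infty}=\mathcal{L}(\phi_0)$; moreover $\mathcal{L}$ is \emph{linear} in $\phi$, and the local energy inequality (\ref{loc_ene_ineq}), after dividing by $2$ and rearranging, says precisely that $\mathcal{L}(\phi)\ge 0$ whenever $\phi\ge 0$ (equivalently, $\mathcal{L}$ is integration against the nonnegative defect measure supported on the singular set). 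Thus the estimate will follow from monotonicity and linearity of $\mathcal{L}$, once the cut-offs are suitably compared.

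The key step — and the main obstacle — is the pointwise comparison
\[
\sum_{i=1}^n \psi_{\bfx_i,R}(\bfx)\le K\,\psi_0(\bfx)\qquad\text{for all }\bfx,
\]
with $K$ depending only on $K_2$ and the dimension. Since each $\psi_{\bfx_i,R}\le\psi_0$ by (\ref{psi_def}), it suffices to bound, at each $\bfx$, the number of indices $i$ with $\psi_{\bfx_i,R}(\bfx)\neq 0$. For $\bfx\in B(\bfo,R_0)$ this is immediate: $\psi_{\bfx_i,R}$ vanishes outside $B(\bfx_i,2R)$, so a nonzero value forces $\bfx\in B(\bfx_i,2R)$, and by the optimality condition (\ref{n_con2}) at most $K_2$ such indices occur. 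The delicate case is a point $\bfx$ in the annular layer between $S(\bfo,R_0)$ and $S(\bfo,2R_0)$, where the boundary cut-offs are extended through the cone construction (\ref{psi_def_add1})--(\ref{psi_def_add2}). Here I would project $\bfx$ radially to $\bfx'=R_0\bfx/|\bfx|\in S(\bfo,R_0)$ and check, from the cone construction, that $\psi_{\bfx_i,R}(\bfx)\neq 0$ forces $\bfx'$ into a fixed dilate of $B(\bfx_i,2R)$; the multiplicity is then again controlled by (\ref{n_con2}) applied at $\bfx'\in\overline{B(\bfo,R_0)}$, at the cost of a dimensional packing factor. Verifying that the cone bookkeeping transfers the covering multiplicity without loss is the part that genuinely requires care.

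Granting the cut-off comparison, I would multiply by the common temporal factor $\eta$ to get $\sum_i\phi_{\bfx_i,T,R}\le K\,\phi_0$, so that $K\phi_0-\sum_i\phi_{\bfx_i,T,R}\ge 0$. Applying $\mathcal{L}$ and using its linearity together with its nonnegativity on nonnegative test functions yields
\[
\sum_{i=1}^n\Phi_{\bfx_i,R}^{\infty}=\mathcal{L}\Bigl(\sum_{i=1}^n\phi_{\bfx_i,T,R}\Bigr)\le K\,\mathcal{L}(\phi_0)=K\,\Phi_{\infty}.
\]
Finally I insert this into the definition (\ref{Phi_R_inf_def}) and invoke the lower bound $n\ge(R_0/R)^3$ from (\ref{n_con1}), i.e.\ $nR^3\ge R_0^3$, to obtain
\[
\Phi_R^{\infty}=\frac{1}{T}\frac{1}{nR^3}\sum_{i=1}^n\Phi_{\bfx_i,R}^{\infty}\le\frac{K}{T\,nR^3}\,\Phi_{\infty}\le K\,\frac{1}{T}\frac{1}{R_0^3}\,\Phi_{\infty},
\]
which is the claimed bound. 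All constants enter only through the geometric comparison, so $K$ indeed depends only on $K_2$ and the dimension of $\mR3$, as asserted.
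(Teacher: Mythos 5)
Your argument is correct, and its analytic core is in fact identical to the paper's, just stated more abstractly: the paper's maneuver of adding (\ref{lem_eq2}) and (\ref{lem_eq3}) and subtracting (\ref{lem_eq1}) is precisely your observation that the defect functional $\mathcal{L}$ is linear and nonnegative on nonnegative test functions. Where you genuinely diverge is the combinatorics. The paper never compares $\sum_i \phi_i$ with $\phi_0$ directly; it only uses sub-families whose doubled balls $B(\bfx_{i_j},2R)$ have pairwise disjoint interiors, for which $\sum_j \psi_{i_j}\le\psi_0$ holds with constant $1$ (disjointness of the doubled balls forces disjointness of the cone extensions too, since those meet $S(\bfo,R_0)$ in disjoint caps), and it then covers the whole index set by $8^3$ such disjoint families via a cubic lattice of spacing $R/2$ split into sub-lattices, arriving at $K=8^3K_2$. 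You instead prove one uniform pointwise bound $\sum_i\psi_i\le K\psi_0$ and apply positivity a single time. Your route is shorter, avoids the lattice bookkeeping, and yields a comparable constant; its entire cost is concentrated in the step you flagged. (Your citation of (\ref{n_con1}) for $nR^3\ge R_0^3$ at the end is the right one; the paper's reference to (\ref{n_con2}) there is a typo.)

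That flagged step does close, but with two points you should make explicit. First, in the annular layer it is not only the boundary cut-offs that survive: an interior ball has $|\bfx_i|\le R_0-R$, so $B(\bfx_i,2R)$ protrudes at most $R$ beyond $S(\bfo,R_0)$, and $\psi_i(\bfx)\neq 0$ there gives $|\bfx-\bfx'|\le R$, hence $|\bfx'-\bfx_i|<3R$; for a boundary cut-off, (\ref{psi_def_add2}) forces $\bfx'$ into $S(\bfo,R_0)\cap B(\bfx_i,2R)$ exactly, because the cone is the union of rays through that cap. Second, since $\bfx'$ is only guaranteed to lie in the dilate $B(\bfx_i,3R)$, you cannot invoke (\ref{n_con2}) verbatim at $\bfx'$; the ``dimensional packing factor'' is genuinely needed and is obtained by applying (\ref{n_con2}) at the \emph{centers}: if $s$ is a center of the covering and $|\bfx_i-s|<2R$, then $s\in B(\bfx_i,2R)\cap B(\bfo,R_0)$, so at most $K_2$ indices $i$ qualify; taking a maximal $2R$-separated subset $S$ of the centers lying in $B(\bfx',3R)$, the disjoint balls $B(s,R)$, $s\in S$, sit inside $B(\bfx',4R)$, so $|S|\le 4^3$ by volume comparison, and the multiplicity at $\bfx$ is at most $4^3K_2$. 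Hence $K=4^3K_2$ works — depending only on $K_2$ and the dimension, as the Lemma requires — and the remainder of your argument (nonnegativity of $K\phi_0-\sum_i\phi_i$, linearity of $\mathcal{L}$, and (\ref{n_con1})) goes through verbatim.
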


\begin{proof}
Let $\{\bfx_{i_j}\}$ be a subset of $\{\bfx_i\}_{i=1,n}$ such that
interiors of the balls $B(\bfx_{i_j},2R)$ are pairwise disjoint.
Using (\ref{ener_eq1}), we obtain
\begin{equation}\label{lem_eq1}
\begin{aligned}
&\iint(\frac{1}{2}|\bfu|^2+p)\bfu\cdot\nabla\phi_0\,d\bfx\,dt\,- \Phi_{\infty}\\
&=
\nu\iint|\nabla\otimes\bfu|^2\phi_0\,d\bfx\, dt\, -
\frac{1}{2}\iint|\bfu|^2(\partial_t\phi_0+\nu\Delta\phi_0)\,d\bfx\,dt\;
\end{aligned}
\end{equation}
and
\begin{equation}\label{lem_eq2}
\begin{aligned}
&\iint(\frac{1}{2}|\bfu|^2+p)\bfu\cdot\nabla(\sum\limits_{j}\phi_{i_j})\,d\bfx\,dt\,-
\sum\limits_{j}\Phi_{\bfx_{i_j},R}^{\infty}\\
&= \nu\iint|\nabla\otimes\bfu|^2(\sum\limits_{j}\phi_{i_j})d\bfx dt
- \frac{1}{2}\iint|\bfu|^2[\partial_t(\sum\limits_{j}\phi_{i_j})+
\nu\Delta(\sum\limits_{j}\phi_{i_j})]d\bfx dt
\end{aligned}
\end{equation}
where $\phi_0=\eta\psi_0$ and $\phi_{i_j}=\eta\psi_{i_j}$ with
$\eta$ as in (\ref{eta_def}), $\psi_0$ as in (\ref{psi0}) and
$\psi_{i_j}$ a test function corresponding to $B(\bfx_{i_j},R)$
satisfying (\ref{psi_def}-\ref{psi_def_add2}).

Note that the definitions of $\phi_0$ and $\phi_{i_j}$ imply
\[
\tilde{\phi}=\phi_0-\sum\limits_{j}\phi_{i_j}\ge0\;;
\]
hence, by the local energy inequality (\ref{loc_ene_ineq}),
\begin{equation}\label{lem_eq3}
\begin{aligned}
&\iint(\frac{1}{2}|\bfu|^2+p)\bfu\cdot\nabla\tilde{\phi}\,d\bfx\,dt\,\\
&
\ge
\nu\iint|\nabla\otimes\bfu|^2\tilde{\phi}\,d\bfx\, dt\, -
\frac{1}{2}\iint|\bfu|^2(\partial_t\tilde{\phi}+\nu\Delta\tilde{\phi})\,d\bfx\,dt\;.
\end{aligned}
\end{equation}
If we add relations (\ref{lem_eq2}) and (\ref{lem_eq3}) and then subtract
(\ref{lem_eq1}) we obtain
\begin{equation}\label{claim1}
\Phi_{\infty}\ge\sum\limits_{j}\Phi_{\bfx_{i_j},R}^{\infty}\;.
\end{equation}

Let $\mathcal{L}$ be a cubic lattice inside $B(\bfo,R_0)$ with the
points situated at the vertices of cubes of side $R/2$ (Note that
this lattice can be chosen such that the number of points in it is
between $2^3(R_0/R)^3$ and $(4\pi/3)2^3(R_0/R)^3$).

Since the covering $\{B(\bfx_i,R)\}$ is optimal, each point in
$\mathcal{L}$ is contained in at most $K_2$ balls. Moreover, any
ball in the covering will contain at least one point from the
lattice.

If $\mathcal{L}'$ is sub-lattice of $\mathcal{L}$ with points at
vertices of cubes of side $4R$, then the interiors of balls of
radius $2R$ containing different points of $\mathcal{L}'$ are
pairwise disjoint, and thus if we denote by $B(\bfx_{i_p},R)$ a ball
from the covering $\{B(\bfx_i,R)\}$ containing the point
$p\in\mathcal{L}'$, by (\ref{lem_eq3}),
\[\Phi_{\infty}\ge\sum\limits_{p\in\mathcal{L}'}\Phi_{\bfx_{i_p},R}^{\infty}\;.\]

Note that for each point $p\in\mathcal{L}'$ there are at most $K_2$ choices for
$B(\bfx_{i_p},R)$.
So
\[
K_2\Phi_{\infty}\ge\sum\limits_{i:B(\bfx_{i},R)\cap\mathcal{L}'
\not=\emptyset}\Phi_{\bfx_{i},R}^{\infty}\;.
\]
Clearly $\mathcal{L}$  can be written as a union of $8^3=256$
sub-lattices $\mathcal{L}'_k$, $k=1,\dots,256$, each
$\mathcal{L}'_k$ having the same properties as $\mathcal{L}'$.
Thus,
\[
8^3K_2\Phi_{\infty}\ge\sum\limits_{i=1}^{n}\Phi_{\bfx_{i},R}^{\infty}\;.
\]
Consequently,
\[\Phi_R^{\infty}=\frac{1}{T}\frac{1}{R^3}\frac{1}{n}\sum\limits_{i=1}^{n}\Phi_{\bfx_{i},R}^{\infty}\le
8^3K_2\frac{1}{T}\frac{1}{R^3}\frac{1}{n}\Phi_{\infty}\le
8^3K_2\frac{1}{T}\frac{1}{R_0^3}\Phi_{\infty}\;\] where the last
inequality is due to $n$ satisfying (\ref{n_con2}).
\end{proof}

According to the lemma, the time-space ensemble averages
$\Phi_R^{\infty}$ taken over the optimal coverings at the scale $R$
are bounded, \emph{independently} of $R$, by the average loss of
flux due to possible singularities inside $B(\bfo,2R_0)$.

\section{Energy Cascade}\label{balls}

Let $\{B(\bfx_i,R)\}_{i=1,n}$ be an optimal covering of $B(\bfo,R_0)$.

Note that the local energy equality (\ref{ener_eq1}) and the
definitions of $\bfE_R$, $\Phi_R$ and $\Phi_R^{\infty}$
(\,(\ref{E_R_def}), (\ref{Phi_R_def}) and (\ref{Phi_R_inf_def})\,)
imply

\begin{equation}\label{ene-eq}
\Psi_R=\Phi_R-\Phi_R^{\infty}= \nu \bfE_R -
\frac{1}{n}\sum\limits_{i=1}^{n}\frac{1}{T}\frac{1}{R^3}\iint\frac{1}{2}
|\bfu|^2(\partial_t\phi_i+\nu\Delta\phi_i)\,d\bfx\,dt\;
\end{equation}
where $\phi_i=\eta\psi_i$ and $\psi_i$ is the spatial cut-off on
$B(\bfx_i,2R)$ satisfying (\ref{eta_def}-\ref{psi_def_add2}).

If
\begin{equation}\label{T_con}
T\ge \frac{R_0^2}{\nu},
\end{equation}
then for any $0<R\le R_0$,

\[|(\phi_i)_t|=|\eta_t\psi_i|\le C_0\frac{1}{T}\eta^{\delta}\psi_i\le
\nu\frac{C_0}{R^2}\phi_i^{2\delta-1},\]
\[\nu|\Delta\phi_i|=\nu|\eta\Delta\psi_i|\le C_0\frac{\nu}
{R^2}\eta\psi_i^{2\delta-1}\le\nu\frac{C_0}{R^2}\phi_i^{2\delta-1};\]
hence,
\[\Psi_R\ge \nu \bfE_R -\nu \frac{C_0}{R^2}\,\bfe_R.\]

The optimality conditions (\ref{n_con1}) and (\ref{n_con2}) paired
with (\ref{psi_def}--\ref{psi_def_add2}) imply
\begin{equation}\label{E_R_E_ineq}
\bfE_R\ge\frac{1}{K_1}\bfE
\end{equation}
and
\begin{equation}\label{e_R_e_ineq}
\bfe_R\le{K_2}\bfe\;.
\end{equation}

Consequently,

\begin{equation}\label{low_bd_rel}
\Psi_R\ge \nu \frac{1}{K_1}\bfE -\nu \frac{C_0K_2}{R^2}\,\bfe\;
\end{equation}
leading to the following proposition.

\begin{prop}
\begin{equation}\label{low_bd}
\Psi_R\ge c_1\nu \bfE\,\left(1-c_2\frac{\tau_0^2}{R^2}\right)
\end{equation}
with $c_1=1/K_1$ and $c_2=C_0K_1K_2$ (provided conditions
(\ref{n_con1}-\ref{n_con2}) are satisfied).
\end{prop}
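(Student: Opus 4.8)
The plan is to read the conclusion off the lower bound (\ref{low_bd_rel}), which is already in hand: under the optimality conditions (\ref{n_con1})--(\ref{n_con2}) together with the time-scale requirement (\ref{T_con}) one has
\[
\Psi_R\ge \nu \frac{1}{K_1}\bfE -\nu \frac{C_0K_2}{R^2}\,\bfe .
\]
The only remaining move is to absorb the two a priori distinct averages $\bfE$ and $\bfe$ into a single dimensionless combination. Since both terms on the right carry a common factor of $\nu\bfE$ after rescaling, the natural device is the Taylor micro-scale.

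Concretely, I would invoke the definition (\ref{tau_def}), $\tau_0=(\bfe/\bfE)^{1/2}$, equivalently $\bfe=\tau_0^2\,\bfE$, and substitute into the displayed inequality to obtain
\[
\Psi_R\ge \nu \frac{1}{K_1}\bfE -\nu \frac{C_0K_2}{R^2}\,\tau_0^2\,\bfE .
\]
Factoring out $\tfrac{1}{K_1}\nu\bfE$ then yields
\[
\Psi_R\ge \frac{1}{K_1}\,\nu\bfE\left(1-C_0K_1K_2\,\frac{\tau_0^2}{R^2}\right),
\]
which is exactly (\ref{low_bd}) with $c_1=1/K_1$ and $c_2=C_0K_1K_2$.

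There is no serious obstacle at the level of the proposition itself --- it is a one-line substitution once (\ref{low_bd_rel}) is established --- so the real content lives upstream, and the points I would double-check are the three ingredients feeding (\ref{low_bd_rel}). First, the sign structure of the local energy identity (\ref{ene-eq}): the anomalous flux is subtracted on the left, so that $\Psi_R$ equals $\nu\bfE_R$ minus the derivative term, and the inequality must be oriented to bound $\Psi_R$ from below. Second, the requirement (\ref{T_con}), $T\ge R_0^2/\nu$, is precisely what forces the time-derivative bound $|(\phi_i)_t|\le \nu C_0 R^{-2}\phi_i^{2\delta-1}$ to match the viscous-Laplacian bound $\nu|\Delta\phi_i|\le \nu C_0 R^{-2}\phi_i^{2\delta-1}$, allowing the full $\partial_t\phi_i+\nu\Delta\phi_i$ contribution to be controlled by $\nu C_0 R^{-2}\bfe_R$. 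Third, the two optimal-covering comparisons must point in opposite directions --- $\bfE_R\ge \bfE/K_1$ from (\ref{E_R_E_ineq}) giving a lower bound on the good term, and $\bfe_R\le K_2\bfe$ from (\ref{e_R_e_ineq}) giving an upper bound on the subtracted term --- which is exactly the configuration needed so that both estimates reinforce rather than cancel. Granting these, substitution of the Taylor scale completes the argument.
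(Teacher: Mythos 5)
Your proposal is correct and follows exactly the paper's own route: the proposition is obtained by substituting the Taylor-scale identity $\bfe=\tau_0^2\,\bfE$ from (\ref{tau_def}) into the previously established bound (\ref{low_bd_rel}) and factoring out $\frac{1}{K_1}\nu\bfE$. Your accounting of the upstream ingredients --- the sign structure of (\ref{ene-eq}), the role of (\ref{T_con}) in matching the $\partial_t\phi_i$ and $\nu\Delta\phi_i$ bounds, and the opposite-direction covering estimates (\ref{E_R_E_ineq}) and (\ref{e_R_e_ineq}) --- also matches the paper's derivation.
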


Suppose that
\begin{equation}\label{scales_con}
\tau_0< \frac{\gamma}{c_2^{1/2}}R_0
\end{equation}
for some $0<\gamma<1$. Then, for any $R$,
$(c_2^{1/2}/\gamma)\,\tau_0 \le R \le R_0$,
\begin{equation}\label{lower_bd}
\Psi_R\ge{c_1}(1-\gamma^2)\nu \bfE=c_{0,\gamma}\nu \bfE\;
\end{equation}
where
\begin{equation}
c_{0,\gamma}={c_1}(1-\gamma^2)=\frac{1-\gamma^2}{K_1}\;.
\end{equation}

To obtain an upper bound on the averaged modified flux, note that
for optimal coverings, in addition to (\ref{e_R_e_ineq}),
\begin{equation}\label{E_R_le_E_ineq}
\bfE_R\le{K_2}\bfE\;.
\end{equation}
Hence, (\ref{ene-eq}) implies
\[\Psi_R\le \nu \bfE_R+\nu \frac{C_0}{R^2}\bfe_R\le\nu K_2\bfE+\nu C_0K_2\frac{1}{R^2}\,\bfe.\]
If the condition (\ref{scales_con}) holds for some $0<\gamma<1$,
then it follows that for any $R$, $({c_2}^{1/2}/{\gamma})\,\tau_0
\le R\le R_0$,
\begin{equation}
\Psi_R\le \nu K_2 \bfE+\nu \frac{C_0K_2\gamma^2}{c_2}\bfE\le
c_{1,\gamma}\nu \bfE\;
\end{equation}
where
\begin{equation}
c_{1,\gamma}=K_2 \left[1+\frac{C_0\gamma^2}{c_2}\right]=K_2
\left[1+\frac{\gamma^2}{K_1K_2}\right]\;.
\end{equation}

Thus we have proved the following.

\begin{thm}\label{balls_thm}
Assume that for some $0<\gamma<1$
\begin{equation}\label{scales_con_fin}
\tau_0< c{\gamma}\,R_0\;,
\end{equation}
where
\begin{equation}\label{c_con1}
c=\frac{1}{\sqrt{C_0K_1K_2}}\;.
\end{equation}
Then, for all $R$,
\begin{equation}\label{inert_range}
\frac{1}{c\gamma}\,\tau_0\le R\le R_0,
\end{equation}
the averaged modified flux $\Psi_R$ satisfies
\begin{equation}\label{ener_casc}
c_{0,\gamma}\nu \bfE\le\Psi_R\le c_{1,\gamma} \nu \bfE\;
\end{equation}
where
\begin{equation}\label{c_con2}
c_{0,\gamma}=\frac{1-\gamma^2}{K_1}\,, \quad
c_{1,\gamma}=K_2 \left[1+\frac{\gamma^2}{K_1K_2}\right]\;,
\end{equation}
and the average $\lgl\cdot\rgl_R$ is computed over a time interval
$[0,2T]$ with $T\ge R_0^2/\nu$ and determined by an optimal covering
of $B(\bfo,R_0)$ (i.e., a covering satisfying (\ref{n_con1}) and
(\ref{n_con2})).
\end{thm}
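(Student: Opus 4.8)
The plan is to read off both inequalities in (\ref{ener_casc}) directly from the exact localized energy balance, since the theorem essentially packages the two-sided estimate that the computation preceding it has already assembled. First I would start from the identity (\ref{ene-eq}),
\[
\Psi_R = \nu\bfE_R - \frac{1}{n}\sum_{i=1}^{n}\frac{1}{T}\frac{1}{R^3}\iint \tfrac{1}{2}|\bfu|^2\bigl(\partial_t\phi_i + \nu\Delta\phi_i\bigr)\,d\bfx\,dt,
\]
which is exact (the anomalous loss $\Phi_R^{\infty}$ having been absorbed into the modified flux $\Psi_R$) and holds for every optimal covering. The whole argument then reduces to controlling the correction term in terms of the averaged local energy $\bfe_R$.

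The central estimate is to invoke the time-scale condition (\ref{T_con}), $T\ge R_0^2/\nu$, together with the refined cut-off bounds (\ref{eta_def}) and (\ref{psi_def}). These yield $|\partial_t\phi_i|\le \nu C_0 R^{-2}\phi_i^{2\delta-1}$ and $\nu|\Delta\phi_i|\le \nu C_0 R^{-2}\phi_i^{2\delta-1}$ uniformly in $i$ and in $0<R\le R_0$, so that the correction term is bounded in absolute value by $\nu C_0 R^{-2}\bfe_R$. This gives the two-sided bound $\nu\bfE_R - \nu C_0 R^{-2}\bfe_R \le \Psi_R \le \nu\bfE_R + \nu C_0 R^{-2}\bfe_R$. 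Next I would replace the averaged local quantities by the global ones on $B(\bfo,R_0)$ through the optimality inequalities (\ref{E_R_E_ineq}), (\ref{e_R_e_ineq}), (\ref{E_R_le_E_ineq}), namely $\tfrac{1}{K_1}\bfE\le \bfE_R\le K_2\bfE$ and $\bfe_R\le K_2\bfe$.

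Substituting and rewriting the ratio $\bfe/(\bfE R^2)$ as $\tau_0^2/R^2$ via the definition (\ref{tau_def}) of the Taylor scale turns the lower bound into the form (\ref{low_bd}), $\Psi_R\ge c_1\nu\bfE\bigl(1-c_2\tau_0^2/R^2\bigr)$ with $c_1=1/K_1$ and $c_2=C_0K_1K_2$, and produces the parallel upper bound $\Psi_R\le K_2\nu\bfE\bigl(1+C_0\tau_0^2/R^2\bigr)$. Finally, observing that the constant in (\ref{c_con1}) is exactly $c=c_2^{-1/2}$, the hypothesis (\ref{scales_con_fin}) coincides with (\ref{scales_con}), and restricting $R$ to the inertial range (\ref{inert_range}), $\tfrac{1}{c\gamma}\tau_0\le R\le R_0$, forces the single algebraic inequality $\tau_0^2/R^2\le \gamma^2/c_2$. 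Feeding this in collapses the two bounds to $c_{0,\gamma}\nu\bfE\le \Psi_R\le c_{1,\gamma}\nu\bfE$ with the constants (\ref{c_con2}), since $c_2\cdot\gamma^2/c_2=\gamma^2$ for the lower bound and $C_0\cdot\gamma^2/c_2=\gamma^2/(K_1K_2)$ for the upper one.

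I expect the only genuine obstacle — the step that must be secured before this bookkeeping is legitimate — to be the enstrophy comparison (\ref{E_R_E_ineq}), $\bfE_R\ge \tfrac{1}{K_1}\bfE$. Unlike the upper bounds, which follow from the bounded-overlap condition (\ref{n_con2}), this lower bound requires that the cut-offs $\psi_i$ over the covering collectively dominate the global cut-off $\psi_0$ in the averaged sense; it relies on the count $n\ge(R_0/R)^3$ from (\ref{n_con1}) and, for the covering elements not contained in $B(\bfo,R_0)$, on the careful construction of the boundary cut-offs satisfying (\ref{psi_def_add1})--(\ref{psi_def_add2}). Everything else is substitution of constants and the restriction of $\tau_0^2/R^2$ on the inertial range.
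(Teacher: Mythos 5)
Your proposal reproduces the paper's own proof essentially step for step: the exact balance (\ref{ene-eq}), the cut-off estimates $|\partial_t\phi_i|,\ \nu|\Delta\phi_i|\le \nu C_0R^{-2}\phi_i^{2\delta-1}$ under (\ref{T_con}), the optimality comparisons $\tfrac{1}{K_1}\bfE\le\bfE_R\le K_2\bfE$, $\bfe_R\le K_2\bfe$, and the same substitution $\bfe=\tau_0^2\bfE$ with $c_2=C_0K_1K_2$ giving exactly the constants (\ref{c_con2}), so the proof is correct and takes the same route. One minor slip in your closing commentary only: the lower bound (\ref{E_R_E_ineq}) uses the \emph{upper} count $n\le K_1(R_0/R)^3$ from (\ref{n_con1}) (so that $1/(nR^3)\ge 1/(K_1R_0^3)$ after the covering cut-offs dominate $\psi_0$), whereas it is the upper bounds (\ref{e_R_e_ineq}) and (\ref{E_R_le_E_ineq}) that need $n\ge(R_0/R)^3$ in tandem with the overlap condition (\ref{n_con2}), not the other way around.
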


\begin{obs}{\em
As noted in the introduction -- in the case the global energy is
non-increasing -- the theorem provides a sufficient condition for
the energy cascade; i.e., a nearly constant nonlinear transfer of
time-averaged (kinetic) energy to smaller scales across the inertial
range defined by (\ref{inert_range}).

More precisely, since we are working with weak solutions,
the expression for the rate of change of local kinetic
energy (\ref{loc_tr}) morphs into
\begin{equation}
 -\iint \frac{1}{2}|\bfu|^2 \phi_t = \Psi_{\bfx_0,R} + \ \mbox{viscous terms}.
\end{equation}
The interpretation remains the same, the only differences being that the
time-derivative of the local kinetic energy is taken in the sense of distributions
and the flux got replaced with the modified flux to account for
possible singularities.

Note that the averages are taken over finite-time
intervals
 $[0,2T]$ with $T\ge R_0^2/\nu$ (see (\ref{T_con})\,). This lower
 bound on the length of the time interval $T$ is consistent with the
 picture of decaying turbulence; namely, small $\nu$ corresponds to
 the well-developed turbulence which then persists for a longer time
 and it makes sense to average over longer time-intervals.
}\end{obs}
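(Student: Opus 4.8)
The plan is to read the displayed identity in the Remark as nothing more than an algebraic reorganization of the local energy equality (\ref{ener_eq1}), with the weak-solution content entering only through two points: the \emph{definition} of the anomalous flux $\Phi_{\bfx_0,R}^\infty$, and the \emph{distributional} reading of the time derivative. First I would record that, since $\phi=\eta\psi$ with $\eta$ a function of $t$ alone, one has $\nabla\phi=\eta\,\nabla\psi$, so that the left-hand side of (\ref{ener_eq1}) is term by term equal to $\int\Phi_{\bfx_0,R}(t)\,dt-\Phi_{\bfx_0,R}^\infty$ by the flux definition (\ref{fluxdef}); by (\ref{Psi_R_x0_def}) this is exactly the modified flux $\Psi_{\bfx_0,R}$. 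Thus (\ref{ener_eq1}) already reads
\[
\Psi_{\bfx_0,R}=\nu\iint|\nabla\otimes\bfu|^2\phi\,d\bfx\,dt-\frac{1}{2}\iint|\bfu|^2(\partial_t\phi+\nu\Delta\phi)\,d\bfx\,dt.
\]

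Next I would split the weight $\partial_t\phi+\nu\Delta\phi$ into its purely temporal part $\phi_t=\eta'\psi$ and its spatial part $\nu\Delta\phi=\nu\,\eta\,\Delta\psi$, and move the temporal part to the left. This isolates
\[
-\iint\frac{1}{2}|\bfu|^2\phi_t\,d\bfx\,dt=\Psi_{\bfx_0,R}-\nu\iint|\nabla\otimes\bfu|^2\phi\,d\bfx\,dt+\frac{\nu}{2}\iint|\bfu|^2\Delta\phi\,d\bfx\,dt,
\]
which is the asserted identity once the two $\nu$-terms are collected into the ``viscous terms.'' To justify that name, and to recover the formal identity (\ref{loc_tr}) in the regular case, I would integrate by parts in space: for smooth $\bfu$ one has $\nu\int\Delta\bfu\cdot\bfu\,\psi=-\nu\int|\nabla\otimes\bfu|^2\psi+\frac{\nu}{2}\int|\bfu|^2\Delta\psi$, so after multiplying by $\eta$ and integrating in $t$ the bracketed $\nu$-terms coincide with $\nu\iint\triangle\bfu\cdot\bfu\,\phi$, i.e. the space-time-tested version of the viscous term in (\ref{loc_tr}).

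The one genuinely non-routine point, and the step I expect to require the most care, is the distributional reading of the left-hand side. I would argue that $t\mapsto\frac{1}{2}\int|\bfu|^2\psi\,d\bfx$ is a well-defined (indeed $L^\infty$, by part (a) of the definition of a suitable weak solution) function of $t$, whose distributional derivative tested against $\eta\in\mathcal{D}(0,2T)$ is precisely $-\int\eta'(t)\bigl(\int\frac{1}{2}|\bfu|^2\psi\,d\bfx\bigr)\,dt=-\iint\frac{1}{2}|\bfu|^2\phi_t\,d\bfx\,dt$; this is the sense in which $\frac{d}{dt}\int\frac{1}{2}|\bfu|^2\psi$ in (\ref{loc_tr}) ``morphs into'' the displayed left-hand side. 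The replacement of $\Phi_{\bfx_0,R}$ by $\Psi_{\bfx_0,R}$ is then transparent: the nonnegative gap $\Phi_{\bfx_0,R}^\infty$ between the two sides of the local energy inequality (\ref{loc_ene_ineq}) is exactly the flux lost to possible singularities, defined so as to turn that inequality into the equality (\ref{ener_eq1}); subtracting it is what produces the modified flux in place of $\Phi_{\bfx_0,R}$. Finally I would note that the constraint $T\ge R_0^2/\nu$ from (\ref{T_con}) plays no role in the identity itself; it is recorded only because it enters the cascade estimate of Theorem \ref{balls_thm} and matches the intrinsic parabolic scaling $t\sim R_0^2/\nu$ of decaying turbulence.
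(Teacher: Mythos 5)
Your proposal is correct and follows essentially the same route the paper intends: the displayed identity is nothing but the local energy equality (\ref{ener_eq1}) -- i.e., the defining relation for the anomalous flux $\Phi_{\bfx_0,R}^{\infty}$, whose left side is $\Psi_{\bfx_0,R}$ by (\ref{fluxdef}) and (\ref{Psi_R_x0_def}) -- rearranged to isolate the $\phi_t$-term, with the remaining $\nu$-terms identified (via the spatial integration by parts you record) with the space-time-tested viscous term of (\ref{loc_tr}). Your reading of the time derivative distributionally through $\eta$, and your observation that (\ref{T_con}) plays no role in the identity itself but only in the cascade estimates, likewise match the paper.
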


\begin{obs}\label{Rem_4.2}{\em
In the language of turbulence, the condition (\ref{scales_con_fin})
simply reads that the Taylor \emph{micro-scale} computed over the
domain in view is smaller than the \emph{integral scale} (diameter
of the domain).

On the other hand, (\ref{scales_con_fin}) is equivalent to

\[
\frac{1}{T}\iint |\bfu|^2\phi_0^{2\delta-1}\,d\bfx\,dt
<\frac{\gamma^2}{C_0K_1K_2}{R_0^2} \frac{1}{T}\iint
|\nabla\otimes\bfu|^2\phi_0\,d\bfx\,dt\;
\]
which can be read as a requirement that the time average of a
Poincar\'e-like inequality on $B(\bfo,2R_0)$ is not saturating; this
will hold for a variety of flows in the regions of active fluid
(large gradients). }\end{obs}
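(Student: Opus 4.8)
The plan is to prove both assertions of the Remark by simply unwinding the definitions; neither requires any analysis beyond elementary algebra. Recall that $\tau_0=(\bfe/\bfE)^{1/2}$ by (\ref{tau_def}), with $\bfe$ and $\bfE$ given by the time--space averages (\ref{e_def}) and (\ref{E_def}). The first assertion is essentially definitional: $\tau_0$ is, by construction, the square root of the ratio of averaged energy to averaged enstrophy over $B(\bfo,2R_0)$, which is precisely the (localized) Taylor micro-scale, while $R_0$ is the integral scale (the radius of the domain in view). Since $c\gamma=\gamma/\sqrt{C_0K_1K_2}$ is a fixed $O(1)$ constant, the hypothesis (\ref{scales_con_fin}) reads exactly as ``Taylor micro-scale $<$ (const)\,$\times$\,integral scale,'' i.e., the micro-scale sits strictly below the integral scale.

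For the equivalence, I would argue as follows. Throughout, we are in the nontrivial regime $\bfE>0$, which is needed for $\tau_0$ to be defined at all and which makes every quantity strictly positive. Since both sides of (\ref{scales_con_fin}) are nonnegative, squaring is an equivalence and gives
\begin{equation*}
\tau_0^2<\frac{\gamma^2}{C_0K_1K_2}\,R_0^2 .
\end{equation*}
Substituting $\tau_0^2=\bfe/\bfE$ and multiplying through by $\bfE>0$ turns this into $\bfe<\frac{\gamma^2}{C_0K_1K_2}R_0^2\,\bfE$, and every step here is reversible. Finally I would insert the integral expressions (\ref{e_def}) and (\ref{E_def}); the common normalizing factor $1/(TR_0^3)$ cancels from both sides, and after multiplying by $R_0^3$ one arrives at the displayed Poincar\'e-type inequality (up to bookkeeping of the factor $\tfrac12$ in the energy density, which only rescales the explicit constant). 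Reversing the chain establishes the converse, so the two statements are equivalent.

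The point deserving the most care is not an analytic obstacle but the interpretation step together with the constant bookkeeping. One must (i) record that the derivation requires $\bfE>0$, so that division by $\bfE$ and the square-root defining $\tau_0$ are legitimate; (ii) track the factor $\tfrac12$ and the constant $1/\sqrt{C_0K_1K_2}$ so that the prefactor in the displayed inequality comes out as stated; and (iii) recognize the right-hand side as a Poincar\'e inequality on $B(\bfo,2R_0)$, whose natural constant scales like $R_0^2$. With this reading, ``non-saturation'' means that the prefactor $\gamma^2/(C_0K_1K_2)$ lies strictly below the sharp Poincar\'e constant, a gap one expects precisely in regions of active fluid where the gradients $|\nabla\otimes\bfu|$ are large relative to $|\bfu|$.
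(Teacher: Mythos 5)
Your proposal is correct and matches the paper's (implicit) reasoning: the remark is purely definitional, and the paper offers nothing beyond squaring (\ref{scales_con_fin}), substituting $\tau_0^2=\bfe/\bfE$ from (\ref{tau_def}) together with (\ref{e_def})--(\ref{E_def}), and cancelling the common factor $1/(TR_0^3)$ — exactly the reversible chain you write down. Your two caveats are also the right ones: $\bfE>0$ is needed for the division, and the factor $\tfrac{1}{2}$ in the energy density means the displayed inequality as printed really holds with constant $2\gamma^2/(C_0K_1K_2)$, a harmless rescaling the paper absorbs into its generic constants.
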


\begin{obs}\label{Phi_low_bds}
{\em

Since $\Phi_R^{\infty}\ge 0$, all the {\em lower} bounds on the
averaged modified fluxes hold for the usual averaged fluxes
$\Phi_R$; in particular,
\begin{equation}
\Phi_R\ge c_1\nu \bfE\,\left(1-c_2\frac{\tau_0^2}{R^2}\right)
\end{equation}
and consequently, provided (\ref{scales_con_fin}) and
(\ref{inert_range}) hold,
\begin{equation}
\Phi_R\ge c_{0,\gamma}\nu\bfE\;.
\end{equation}

Also, if a solution $\bfu$ is such that the (\ref{loc_ene_ineq})
holds with equality and in particular, if the solution is
\emph{regular}, then \emph{all} the estimates, including
(\ref{ener_casc}), hold for the usual averaged flux $\Phi_R$.

}
\end{obs}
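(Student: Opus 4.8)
The plan is to read both assertions of the remark as immediate algebraic consequences of the defining relation (\ref{Psi_R_def}), $\Psi_R=\Phi_R-\Phi_R^{\infty}$, combined with the sign information on the anomalous flux already recorded above; essentially no new estimate is required, only a careful bookkeeping of which direction each inequality goes.

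First I would exploit the nonnegativity $\Phi_R^{\infty}\ge 0$. Rearranging (\ref{Psi_R_def}) gives $\Phi_R=\Psi_R+\Phi_R^{\infty}\ge\Psi_R$; that is, the usual averaged flux always dominates the modified one. Consequently every \emph{lower} bound already derived for $\Psi_R$ transfers verbatim to $\Phi_R$. Applying this to the proposition's estimate (\ref{low_bd}) yields $\Phi_R\ge\Psi_R\ge c_1\nu\bfE\bigl(1-c_2\tau_0^2/R^2\bigr)$, and applying it to the left-hand inequality of (\ref{ener_casc}) yields, under (\ref{scales_con_fin}) and (\ref{inert_range}), $\Phi_R\ge\Psi_R\ge c_{0,\gamma}\nu\bfE$. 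No appeal to the optimal-covering machinery beyond what already produced the $\Psi_R$ bounds is needed.

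For the equality/regular case I would use the definition (\ref{ener_eq1}) of the local anomalous flux $\Phi_{\bfx_i,R}^{\infty}$ as the deficit in the local energy inequality (\ref{loc_ene_ineq}). When (\ref{loc_ene_ineq}) holds with equality throughout $[0,2T]\times B(\bfo,2R_0)$, each such deficit vanishes, so $\Phi_{\bfx_i,R}^{\infty}=0$ for every covering element; averaging through (\ref{Phi_R_inf_def}) then gives $\Phi_R^{\infty}=0$. Hence (\ref{Psi_R_def}) collapses to $\Phi_R=\Psi_R$, and every estimate for $\Psi_R$---now including the \emph{upper} bound in (\ref{ener_casc})---holds for $\Phi_R$ unchanged. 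Since the preliminaries record that regularity of $\bfu$ forces equality in the local energy inequality, the regular case is a special instance of this.

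There is no substantive obstacle here; the only point requiring care is the direction of $\Phi_R\ge\Psi_R$, which is precisely why nonnegativity of $\Phi_R^{\infty}$ transfers \emph{lower} bounds for free but cannot recover the upper bound in general. I would make this asymmetry explicit, emphasizing that the second assertion genuinely needs the hypothesis that the anomalous flux vanishes (equality in (\ref{loc_ene_ineq}), e.g.\ for regular solutions), whereas the first does not.
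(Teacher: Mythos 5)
Your argument is exactly the paper's intended one: the lower bounds transfer via $\Phi_R=\Psi_R+\Phi_R^{\infty}\ge\Psi_R$ (using $\Phi_R^{\infty}\ge0$ from the local energy inequality), and equality in (\ref{loc_ene_ineq}) --- in particular for regular solutions, as recorded in the preliminaries --- forces every anomalous flux to vanish so that $\Phi_R=\Psi_R$ and all estimates, including the upper bound in (\ref{ener_casc}), carry over. Your explicit note on the asymmetry (why nonnegativity alone cannot recover the upper bound) is a correct and welcome clarification, but the route is the same as the paper's.
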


\begin{obs}\label{E'_rem1}{\em
If we do not impose the additional assumptions (\ref{psi_def_add1})
and (\ref{psi_def_add2}) for the test functions on the balls
$B(\bfx_i,R)\not\subset B(\bfo,R_0)$, then the lower bounds for
$\Psi_R$ in (\ref{low_bd}) and (\ref{ener_casc}) will hold with $\bfE$
replaced by the time-space average of  the {\em{non-localized}}
in space enstrophy on $B(\bfo,R_0)$,
\[
E'=\frac{1}{T}\int\limits_0^{2T}
\frac{1}{R_0^3}\int\limits_{B(\bfo,R_0)}|\nabla\otimes\bfu|^2\eta\,d\bfx\,dt\;.
\]
This is the case because the estimate (\ref{E_R_E_ineq}) gets
replaced with
\[\bfE_R\ge\frac{1}{K_1}E'\;.\]
}\end{obs}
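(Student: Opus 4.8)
The plan is to isolate the single new ingredient behind the remark---the replacement of (\ref{E_R_E_ineq}) by $\bfE_R\ge\frac{1}{K_1}E'$---and then to observe that the entire argument from (\ref{low_bd_rel}) onward is formally identical, with $\bfE$ simply carried through as $E'$. Everything hinges on a pointwise lower bound for $\sum_i\psi_i$ that survives the removal of the cone conditions (\ref{psi_def_add1})--(\ref{psi_def_add2}).

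First I would prove that $\sum_{i=1}^n\psi_i(\bfx)\ge 1$ for every $\bfx\in B(\bfo,R_0)$. Indeed, the family $\{B(\bfx_i,R)\}$ covers $B(\bfo,R_0)$, so each such $\bfx$ lies in some $B(\bfx_i,R)$; by (\ref{psi_def}) we have $\psi_i\equiv 1$ on $B(\bfx_i,R)\cap B(\bfo,R_0)$, whence $\psi_i(\bfx)=1$, and since every $\psi_j\ge0$ the full sum is at least $1$ there. The decisive point is that this uses \emph{only} (\ref{psi_def}) together with the covering property; it never invokes the additional boundary prescriptions (\ref{psi_def_add1})--(\ref{psi_def_add2}), which in the original derivation were exactly what forced $\sum_i\psi_i$ to dominate $\psi_0$ on the outer shell $B(\bfo,2R_0)\setminus B(\bfo,R_0)$.

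Next I would assemble the enstrophy. Writing $\phi_i=\eta\psi_i$ and using that the integrand $|\nabla\otimes\bfu|^2\eta$ is nonnegative,
\[
\sum_{i=1}^n\bfE_{\bfx_i,R}(t)=\int|\nabla\otimes\bfu|^2\eta\Bigl(\sum_{i=1}^n\psi_i\Bigr)d\bfx\ge\int_{B(\bfo,R_0)}|\nabla\otimes\bfu|^2\eta\,d\bfx,
\]
where I have discarded the (nonnegative) contribution from outside $B(\bfo,R_0)$ and applied the first step inside it. Dividing by $nR^3$, invoking the optimality bound (\ref{n_con1}) in the form $\frac{1}{nR^3}\ge\frac{1}{K_1R_0^3}$, and averaging in time against $\frac1T$ yields precisely $\bfE_R\ge\frac{1}{K_1}E'$, the announced replacement for (\ref{E_R_E_ineq}).

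Finally, I would substitute this into the chain that produced (\ref{low_bd_rel}): keeping (\ref{e_R_e_ineq}) and the bounds on $\partial_t\phi_i$ and $\nu\Delta\phi_i$ under (\ref{T_con}) untouched, the sole modification is $\nu\bfE_R\ge\frac{\nu}{K_1}E'$ in place of $\frac{\nu}{K_1}\bfE$, so $\bfE$ is replaced by $E'$ throughout the lower bounds (\ref{low_bd}) and the left inequality of (\ref{ener_casc}). I expect no genuine obstacle in the computation itself; the only thing demanding care---and the conceptual heart of the remark---is the verification in the first step that the outer-shell conditions are truly unused. This is also what explains the price paid: one must settle for the smaller quantity $E'\le\bfE$ (the latter inequality holding because $\psi_0\ge\mathbf{1}_{B(\bfo,R_0)}$), the loss being exactly the enstrophy over the annulus $B(\bfo,2R_0)\setminus B(\bfo,R_0)$ that (\ref{psi_def_add1})--(\ref{psi_def_add2}) were designed to recover.
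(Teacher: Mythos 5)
Your proposal is correct and follows essentially the same route as the paper: the remark's entire content is that (\ref{E_R_E_ineq}) gets replaced by $\bfE_R\ge\frac{1}{K_1}E'$, which you establish by exactly the intended mechanism --- the covering property plus (\ref{psi_def}) give $\sum_i\psi_i\ge 1$ on $B(\bfo,R_0)$ (but no longer dominate $\psi_0$ on the annulus $B(\bfo,2R_0)\setminus B(\bfo,R_0)$ once (\ref{psi_def_add1})--(\ref{psi_def_add2}) are dropped), after which the optimality bound $n\le K_1(R_0/R)^3$ and the unchanged chain leading to (\ref{low_bd_rel}) carry $E'$ through the lower bounds. Your closing observation that $E'\le\bfE$, with the loss being precisely the enstrophy over the outer annulus, correctly identifies the price paid and the purpose of the cone conditions.
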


\section{Locality of the averaged flux}

Let $\bfx_0\in B(\bfo,R_0)$, $0<R_2<R_1\le R_0$. In order to study
the flux through the shell $A(\bfx_0,R_1,R_2)$ between the spheres
$S(\bfx_0,R_2)$ and $S(\bfx_0,R_1)$, in what follows, we will
consider the modified test functions
$\phi=\phi_{\bfx_0,T,R_1,R_2}(t,\bfx)=\eta(t)\psi(\bfx)$ to be used
in the local energy inequality (\ref{loc_ene_ineq}) where
$\eta=\eta_T(t)$ as in (\ref{eta_def}) and
$\psi \in \mathcal{D}(A(\bfx_0,2R_1,R_2/2))$ satisfying
\begin{equation}\label{psi_def2}\begin{aligned}
&0\le\psi\le\psi_0,\quad\psi=1\ \mbox{on}\
A(\bfx_0,R_1,R_2),
&\frac{|\nabla\psi|}{\psi^{\delta}}\le\frac{C_0}{\tilde{R}}, \
\frac{|\triangle\psi|}{\psi^{2\delta-1}}\le\frac{C_0}{\tilde{R}^2}\;
\end{aligned}
\end{equation}
where $\psi_0$ is defined in (\ref{psi0}) and
\begin{equation}\label{tilde_R}
\tilde{R}=\tilde{R}(R_1,R_2)=\min\{R_2,R_1-R_2\}\;.
\end{equation}


Use the $\phi$ above to define the localized time-averaged flux
through the shell between the spheres $S(\bfx_0,R_2)$ and
$S(\bfx_0,R_1)$ as
\begin{equation}
\Phi_{\bfx_0,R_1,R_2}=\frac{1}{T}\iint
(\frac{1}{2}|\bfu|^2+p)
\bfu\cdot\nabla\phi\,d\bfx\,dt\;.
\end{equation}

If $\Phi_{\bfx_0,R_1,R_2}^{\infty}$ is the anomalous flux inside
$A(\bfx_0,2R_1,R_2/2)$, i.e., if $\Phi_{\bfx_0,R_1,R_2}$ satisfies
\begin{equation}\label{ener_eq2}
\begin{aligned}
&\iint(\frac{1}{2}|\bfu|^2+p)\bfu\cdot\nabla\phi\,d\bfx\,dt\,- \Phi_{\bfx_0,R_1,R_2}^{\infty}\\
&
=
\nu\iint|\nabla\otimes\bfu|^2\phi\,d\bfx\,dt\, -
\frac{1}{2}\iint|\bfu|^2(\partial_t\phi+\nu\Delta\phi)\,d\bfx\,dt\;,
\end{aligned}
\end{equation}
then define the time average of the modified localized fluxes
through the shells $A(\bfx_0,R_1,R_2)$ by
\begin{equation}
\Psi_{\bfx_0,R_1,R_2}=\Phi_{\bfx_0,R_1,R_2}-\frac{1}{T}\Phi_{\bfx_0,R_1,R_2}^{\infty}\;.
\end{equation}
As already mentioned, the modified fluxes can be viewed as total
fluxes including the (loss of) flux due to possible singularities
inside the shell. Also note that the local energy inequality
(\ref{loc_ene_ineq}) implies
\begin{equation}
\Phi_{\bfx_0,R_1,R_2}^{\infty}\ge 0.
\end{equation}

Define the time-averaged energy and enstrophy per unit of mass in the shell between
the spheres $S(\bfx_0,R_2)$ and $S(\bfx_0,R_1)$ by
\begin{equation}
\begin{aligned}
&\bfe_{\bfx_0,R_1,R_2}=\frac{1}{T}\iint\frac{1}{2}
|\bfu|^2\phi^{2\delta-1}\,d\bfx\,dt\;,\\
&\bfE_{\bfx_0,R_1,R_2}=\frac{1}{T}\iint
|\nabla\otimes\bfu|^2\phi\,d\bfx\,dt\;;
\end{aligned}
\end{equation}
then,
\begin{equation}
\tau_{\bfx_0,R_1,R_2}=\left(\frac{\bfe_{\bfx_0,R_1,R_2}}{\bfE_{\bfx_0,R_1,R_2}}\right)^{1/2}
\end{equation}
is the \emph{local} Taylor length scale associated with the shell
$A(\bfx_0,R_1,R_2)$.

Note that
\begin{equation}\label{phi_est1}
\nu|\Delta\phi|=\nu|\eta\Delta\psi|\le \nu\frac{C_0}{\tilde{R}^2}|\eta\phi^{2\delta-1}|\le
\nu\frac{C_0}{\tilde{R}^2}\phi^{2\delta-1}
\end{equation}
and
\begin{equation}\label{phi_est2}
|\phi_t|=|\eta_t\psi|\le C_0\frac{1}{T}\eta^{\delta}\psi\le\nu\frac{C_0}{\tilde{R}^2}\phi^{2\delta-1},
\end{equation}
provided
\begin{equation}\label{T_con2}
T\ge \frac{R_0^2}{\nu}\, \left(\ge\frac{\tilde{R}^2}{\nu}\right)\;.
\end{equation}

Hence, (\ref{ener_eq2}) implies that for any $\bfx_0\in B(\bfo,R_0)$
and any $0<R_2<R_1\le R_0$,
\begin{equation}
\begin{aligned}
\Psi_{\bfx_0,R_1,R_2} &\ge \nu \bfE_{\bfx_0,R_1,R_2}-\nu \frac{C_0}{\tilde{R}^2}\bfe_{\bfx_0,R_1,R_2}\\
&= \nu
\bfE_{\bfx_0,R_1,R_2}\,\left(1-C_0\frac{\tau^2_{\bfx_0,R_1,R_2}}{\tilde{R}^2}\right)\;
\end{aligned}
\end{equation}
leading to the following proposition.

\begin{prop}
Let $0<\gamma<1$. Then, for any shell $A(\bfx_0,R_1,R_2)$ satisfying
\begin{equation}\label{scales_con2}
\tau_{\bfx_0,R_1,R_2}<\frac{\gamma}{C_0^{1/2}}\tilde{R}\;
\end{equation}
with $\tilde{R}$ defined by (\ref{tilde_R}),
\begin{equation}\label{low_bd2}
\Psi_{\bfx_0,R_1,R_2}\ge
\nu \bfE_{\bfx_0,R_1,R_2}(1-\gamma^2)\;.
\end{equation}
\end{prop}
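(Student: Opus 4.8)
The plan is to read off the conclusion from the bound displayed immediately before the statement, namely
\[
\Psi_{\bfx_0,R_1,R_2}\ge \nu \bfE_{\bfx_0,R_1,R_2}\left(1-C_0\frac{\tau^2_{\bfx_0,R_1,R_2}}{\tilde{R}^2}\right),
\]
so that the proposition reduces to inserting the hypothesis (\ref{scales_con2}). First I would square (\ref{scales_con2}) and rearrange it into the form $C_0\,\tau^2_{\bfx_0,R_1,R_2}/\tilde{R}^2<\gamma^2$, which is exactly the quantity appearing in the parenthetical factor above; hence $1-C_0\,\tau^2_{\bfx_0,R_1,R_2}/\tilde{R}^2>1-\gamma^2$. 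Since $\bfE_{\bfx_0,R_1,R_2}\ge0$ (it is the space-time integral of a nonnegative density against the nonnegative weight $\phi=\eta\psi$) and $\nu>0$, multiplying this strict inequality through by $\nu\,\bfE_{\bfx_0,R_1,R_2}$ preserves the ordering, and chaining with the displayed bound yields $\Psi_{\bfx_0,R_1,R_2}\ge\nu\,\bfE_{\bfx_0,R_1,R_2}(1-\gamma^2)$, which is (\ref{low_bd2}).

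For a self-contained argument I would retrace the origin of that displayed bound. Dividing the shell energy equality (\ref{ener_eq2}) by $T$ and using the definition of $\Psi_{\bfx_0,R_1,R_2}$ gives
\[
\Psi_{\bfx_0,R_1,R_2}=\nu\,\bfE_{\bfx_0,R_1,R_2}-\frac{1}{2T}\iint|\bfu|^2(\partial_t\phi+\nu\Delta\phi)\,d\bfx\,dt.
\]
The cut-off estimates (\ref{phi_est1}) and (\ref{phi_est2}) --- valid precisely because $T\ge R_0^2/\nu$ as in (\ref{T_con2}) --- bound $|\partial_t\phi+\nu\Delta\phi|$ by a constant multiple of $\nu\,\tilde{R}^{-2}\phi^{2\delta-1}$, so the remainder integral is controlled by $\nu\,C_0\,\tilde{R}^{-2}\bfe_{\bfx_0,R_1,R_2}$ (the harmless numerical factor being absorbed into the generic constant $C_0$ from the cut-off construction). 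Recognizing $\bfe_{\bfx_0,R_1,R_2}/\bfE_{\bfx_0,R_1,R_2}=\tau^2_{\bfx_0,R_1,R_2}$ then produces the parenthetical factor.

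There is no genuine analytic obstacle here: the proposition is a direct corollary, structurally identical to the balls case --- the proposition leading to (\ref{low_bd}) --- but \emph{without} the covering constants $K_1,K_2$, since no optimal-covering average is taken over a single shell. The only points demanding care are bookkeeping ones: confirming the sign of $\bfE_{\bfx_0,R_1,R_2}$ and of $1-\gamma^2$ so the multiplication step is legitimate, and noting that the strict hypothesis (\ref{scales_con2}) comfortably licenses the non-strict conclusion. All the real work sits upstream in the derivative estimates for the shell cut-off $\psi$, which are in turn enabled by the choice $\tilde{R}=\min\{R_2,R_1-R_2\}$ in (\ref{tilde_R}) matching the narrowest length scale of the shell.
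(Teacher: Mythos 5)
Your proposal is correct and follows the paper's own route exactly: the paper likewise divides the shell energy equality (\ref{ener_eq2}) by $T$, applies the cut-off estimates (\ref{phi_est1})--(\ref{phi_est2}) under (\ref{T_con2}) to get $\Psi_{\bfx_0,R_1,R_2}\ge\nu\bfE_{\bfx_0,R_1,R_2}\bigl(1-C_0\tau^2_{\bfx_0,R_1,R_2}/\tilde{R}^2\bigr)$, and then inserts the hypothesis (\ref{scales_con2}). Your extra bookkeeping (nonnegativity of $\bfE_{\bfx_0,R_1,R_2}$, absorbing the numerical factor into $C_0$) matches the paper's implicit conventions, so there is nothing to add.
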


Similarly, utilizing (\ref{ener_eq2}) again, we obtain an upper
bound
\[
\Psi_{\bfx_0,R_1,R_2}\le\nu \bfE_{\bfx_0,R_1,R_2}+\nu\frac{C_0}{\tilde{R}^2}\bfe_{\bfx_0,R_1,R_2}\;.
\]

If the condition (\ref{scales_con2}) holds for some $0<\gamma<1$, then it follows that
\begin{equation}
\Psi_{\bfx_0,R_1,R_2}\le \nu \bfE_{\bfx_0,R_1,R_2}+\gamma^2\nu
\bfE_{\bfx_0,R_1,R_2}\le (1+\gamma^2)\,\nu \bfE_{\bfx_0,R_1,R_2}\;;
\end{equation}
thus, we have arrived at our first locality result.

\begin{thm}\label{shells_thm1}
Let $0<\gamma<1$, $\bfx_0\in B(\bfo,R_0)$ and $0<R_2<R_1\le R_0$. If
\begin{equation}\label{scales_con2_fin}
\tau_{\bfx_0,R_1,R_2}<\frac{\gamma}{C_0^{1/2}}\tilde{R}\;
\end{equation}
with $\tilde{R}$ defined by (\ref{tilde_R}), then
\begin{equation}\label{ener_casc1}
(1-\gamma^2)\,\nu \bfE_{\bfx_0,R_1,R_2}\le\Psi_{\bfx_0,R_1,R_2}\le
(1+\gamma^2)\,\nu \bfE_{\bfx_0,R_1,R_2}\;
\end{equation}
where the time average is taken over an interval of time $[0,2T]$
with $T\ge R_0^2/\nu$.
\end{thm}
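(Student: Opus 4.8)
The plan is to prove both inequalities in (\ref{ener_casc1}) from a single exact identity satisfied by the modified flux, so that the upper and lower bounds emerge symmetrically. The lower bound is already recorded as (\ref{low_bd2}) in the preceding Proposition, so the real content is to notice that the same computation, read in the opposite direction, produces the matching upper bound.

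First I would convert the defining relation (\ref{ener_eq2}) for the anomalous flux $\Phi_{\bfx_0,R_1,R_2}^{\infty}$ into a statement about $\Psi_{\bfx_0,R_1,R_2}$ itself. Dividing (\ref{ener_eq2}) by $T$ and subtracting $\frac{1}{T}\Phi_{\bfx_0,R_1,R_2}^{\infty}$ turns it into the exact identity
\[
\Psi_{\bfx_0,R_1,R_2}=\nu \bfE_{\bfx_0,R_1,R_2}-\frac{1}{2T}\iint|\bfu|^2(\partial_t\phi+\nu\Delta\phi)\,d\bfx\,dt\;.
\]
The crucial structural point is that this is an \emph{equality}, not merely the inequality coming from (\ref{loc_ene_ineq}): the entire defect in the local energy inequality has been absorbed into $\Phi_{\bfx_0,R_1,R_2}^{\infty}$, and hence into $\Psi_{\bfx_0,R_1,R_2}$. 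This is precisely what makes a two-sided estimate available; for the unmodified flux only the lower bound would survive.

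Next I would control the perturbation term in absolute value. By (\ref{phi_est1}) and (\ref{phi_est2}), both valid under the standing hypothesis $T\ge R_0^2/\nu$ of (\ref{T_con2}), each of $|\phi_t|$ and $\nu|\Delta\phi|$ is bounded pointwise by a constant multiple of $\nu\tilde{R}^{-2}\phi^{2\delta-1}$, whence
\[
\left|\frac{1}{2T}\iint|\bfu|^2(\partial_t\phi+\nu\Delta\phi)\,d\bfx\,dt\right|\le \nu\frac{C_0}{\tilde{R}^2}\,\bfe_{\bfx_0,R_1,R_2}=C_0\frac{\tau_{\bfx_0,R_1,R_2}^2}{\tilde{R}^2}\,\nu \bfE_{\bfx_0,R_1,R_2}\;,
\]
where the last equality only uses the definition of the local Taylor scale $\tau_{\bfx_0,R_1,R_2}=(\bfe_{\bfx_0,R_1,R_2}/\bfE_{\bfx_0,R_1,R_2})^{1/2}$. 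Invoking the hypothesis (\ref{scales_con2_fin}) in the equivalent form $C_0\tau_{\bfx_0,R_1,R_2}^2/\tilde{R}^2<\gamma^2$ then bounds this perturbation by $\gamma^2\,\nu \bfE_{\bfx_0,R_1,R_2}$.

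Finally, since $\bfE_{\bfx_0,R_1,R_2}\ge0$, combining the identity with the two-sided bound $-\gamma^2\nu \bfE_{\bfx_0,R_1,R_2}\le -\,\frac{1}{2T}\iint|\bfu|^2(\partial_t\phi+\nu\Delta\phi)\,d\bfx\,dt\le \gamma^2\nu \bfE_{\bfx_0,R_1,R_2}$ yields at once $(1-\gamma^2)\nu \bfE_{\bfx_0,R_1,R_2}\le\Psi_{\bfx_0,R_1,R_2}\le(1+\gamma^2)\nu \bfE_{\bfx_0,R_1,R_2}$, which is (\ref{ener_casc1}). I do not expect a genuine obstacle: every ingredient, the cutoff $\psi$ with the prescribed derivative bounds on the shell, the scale $\tilde{R}=\min\{R_2,R_1-R_2\}$, and the derivative estimates, has already been arranged in the setup. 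The only point demanding care is the bookkeeping of the constant $C_0$, so that the same $C_0$ appearing in (\ref{scales_con2_fin}) matches the one in the perturbation estimate and the threshold comes out to be exactly $\gamma^2$.
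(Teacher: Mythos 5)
Your argument is correct and coincides with the paper's own proof: the paper, too, treats (\ref{ener_eq2}) (divided by $T$) as an exact identity for $\Psi_{\bfx_0,R_1,R_2}$ and bounds the perturbation term $\frac{1}{2T}\iint|\bfu|^2(\partial_t\phi+\nu\Delta\phi)\,d\bfx\,dt$ in both directions via (\ref{phi_est1})--(\ref{phi_est2}) and the definition of $\tau_{\bfx_0,R_1,R_2}$, obtaining the lower bound in the preceding Proposition and the matching upper bound immediately afterwards. Even your caveat about the bookkeeping of $C_0$ is shared with the paper: strictly, summing the time and space cutoff estimates gives $2C_0$ rather than $C_0$ in the perturbation bound, a constant the paper absorbs silently in exactly the same way.
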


\begin{obs}{\em
The theorem states that if the local Taylor scale
$\tau_{\bfx_0,R_1,R_2}$, associated with a shell
$A(\bfx_0,R_1,R_2)$, is smaller than the thickness of the shell
$\tilde{R}$ (a local integral scale), then the time average of the
modified flux through that shell towards its center $\bfx_0$ is
comparable to the time average of the localized enstrophy in the
shell, $\bfE_{\bfx_0,R_1,R_2}$. Thus, under the assumption
(\ref{scales_con2_fin}) the flux through the shell
$A(\bfx_0,R_1,R_2)$ depends essentially only on the enstrophy
contained in the neighborhood of the shell, regardless of what
happens at the other scales, making (\ref{scales_con2}) a sufficient
condition for the \emph{locality} of the flux through
$A(\bfx_0,R_1,R_2)$. }\end{obs}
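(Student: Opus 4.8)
The plan is to derive both inequalities in (\ref{ener_casc1}) directly from the local energy equality (\ref{ener_eq2}). Dividing (\ref{ener_eq2}) by $T$ and recalling the definitions of $\Psi_{\bfx_0,R_1,R_2}$ (which subtracts $\frac1T\Phi_{\bfx_0,R_1,R_2}^{\infty}$) and of $\bfE_{\bfx_0,R_1,R_2}$, the equality rearranges into the identity
\[
\Psi_{\bfx_0,R_1,R_2} = \nu\,\bfE_{\bfx_0,R_1,R_2}
- \frac{1}{2T}\iint |\bfu|^2\,(\partial_t\phi + \nu\Delta\phi)\,d\bfx\,dt\;,
\]
where $\phi=\eta\psi$. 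Thus the whole argument reduces to controlling the single correction term carrying the time-derivative and the Laplacian of the test function.

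First I would bound that correction term using the cut-off estimates (\ref{phi_est1}) and (\ref{phi_est2}). These hold precisely because the shell cut-off $\psi$ is built with the single length scale $\tilde R = \min\{R_2,\,R_1-R_2\}$ from (\ref{tilde_R}) and because the choice $T\ge R_0^2/\nu\ge\tilde R^2/\nu$ in (\ref{T_con2}) lets the factor $1/T$ be absorbed into $\nu/\tilde R^2$. Together they give $|\partial_t\phi + \nu\Delta\phi|\le \nu\,C_0\,\tilde R^{-2}\,\phi^{2\delta-1}$, whence, by the definition of the shell energy $\bfe_{\bfx_0,R_1,R_2}$,
\[
\left| \frac{1}{2T}\iint |\bfu|^2\,(\partial_t\phi + \nu\Delta\phi)\,d\bfx\,dt \right|
\le \nu\,\frac{C_0}{\tilde R^2}\,\frac{1}{T}\iint \tfrac{1}{2}|\bfu|^2\phi^{2\delta-1}\,d\bfx\,dt
= \nu\,\frac{C_0}{\tilde R^2}\,\bfe_{\bfx_0,R_1,R_2}\;.
\]

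Next I would feed this two-sided bound into the identity above and rewrite $\bfe_{\bfx_0,R_1,R_2} = \tau_{\bfx_0,R_1,R_2}^2\,\bfE_{\bfx_0,R_1,R_2}$ via the definition of the local Taylor scale, obtaining
\[
\nu\,\bfE_{\bfx_0,R_1,R_2}\Bigl(1 - C_0\frac{\tau_{\bfx_0,R_1,R_2}^2}{\tilde R^2}\Bigr)
\le \Psi_{\bfx_0,R_1,R_2}
\le \nu\,\bfE_{\bfx_0,R_1,R_2}\Bigl(1 + C_0\frac{\tau_{\bfx_0,R_1,R_2}^2}{\tilde R^2}\Bigr)\;.
\]
Finally, substituting the hypothesis (\ref{scales_con2_fin}), i.e. $C_0\,\tau_{\bfx_0,R_1,R_2}^2/\tilde R^2 < \gamma^2$, turns this into exactly (\ref{ener_casc1}); the lower bound here simply reproduces (\ref{low_bd2}).

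I do not expect a genuine analytic obstacle in this chain of estimates — the content is essentially bookkeeping on the energy equality. The one place that deserves care, and which I regard as the real crux, is organizational: every object appearing ($\Phi_{\bfx_0,R_1,R_2}$, $\Phi_{\bfx_0,R_1,R_2}^{\infty}$, $\bfE_{\bfx_0,R_1,R_2}$, $\bfe_{\bfx_0,R_1,R_2}$) must be defined through the \emph{same} test function $\phi$, so that the weighted enstrophy on the right of (\ref{ener_eq2}) literally coincides with the $\bfE_{\bfx_0,R_1,R_2}$ in the conclusion. This forces one to verify that a shell cut-off $\psi\in\mathcal{D}(A(\bfx_0,2R_1,R_2/2))$ satisfying (\ref{psi_def2}) actually exists with the \emph{single} scale $\tilde R=\min\{R_2,R_1-R_2\}$ governing both the inner transition layer (of width $\sim R_2$) and the outer one (of width $\sim R_1-R_2$); this is the shell analogue of the ball construction (\ref{psi_def}) already used, and carries no new difficulty.
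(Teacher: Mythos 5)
Your proposal is correct and follows essentially the same route as the paper: the paper likewise rearranges the localized energy balance (\ref{ener_eq2}) (divided by $T$), absorbs the $\partial_t\phi$ and $\nu\Delta\phi$ terms via the cut-off bounds (\ref{phi_est1})--(\ref{phi_est2}) under (\ref{T_con2}), converts the resulting $\bfe_{\bfx_0,R_1,R_2}$ correction into $\tau^2_{\bfx_0,R_1,R_2}\,\bfE_{\bfx_0,R_1,R_2}$ by the definition of the local Taylor scale, and then invokes (\ref{scales_con2_fin}) to get the two-sided bound (\ref{ener_casc1}). Your closing observation about using a single test function $\phi$ built on the single scale $\tilde R$ is exactly the role of the construction (\ref{psi_def2})--(\ref{tilde_R}) in the paper, so nothing is missing.
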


\begin{obs}{\em
Similarly as in the case of condition (\ref{scales_con_fin}), we can
observe that condition (\ref{scales_con2_fin}) can be viewed as a
requirement that the time average of a Poincar\'e-like inequality on
the shell is not saturating making it plausible in the case of
intense fluid activity in a neighborhood of the shell. }\end{obs}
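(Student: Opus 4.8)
The statement is interpretive, so the plan is not to prove a theorem but to make the asserted analogy precise, proceeding exactly as in Remark \ref{Rem_4.2} for the ball. First I would unwind the definition of the local Taylor scale: squaring (\ref{scales_con2_fin}) and using $\tau_{\bfx_0,R_1,R_2}^2=\bfe_{\bfx_0,R_1,R_2}/\bfE_{\bfx_0,R_1,R_2}$ converts the hypothesis into
\[
\bfe_{\bfx_0,R_1,R_2}<\frac{\gamma^2}{C_0}\,\tilde{R}^2\,\bfE_{\bfx_0,R_1,R_2},
\]
that is, after inserting the definitions of the localized time-averaged energy and enstrophy,
\[
\frac{1}{T}\iint\frac{1}{2}|\bfu|^2\phi^{2\delta-1}\,d\bfx\,dt<\frac{\gamma^2}{C_0}\,\tilde{R}^2\,\frac{1}{T}\iint|\nabla\otimes\bfu|^2\phi\,d\bfx\,dt.
\]
This is a single algebraic rearrangement; no estimates enter, and $\tilde{R}$ defined by (\ref{tilde_R}) is precisely the length scale built into the cut-off $\psi$ through the bounds (\ref{psi_def2}).

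The second step is to recognize the right-hand structure as that of a Poincar\'e inequality localized to the shell. On a domain whose characteristic length is $\tilde{R}$ (the thickness of $A(\bfx_0,R_1,R_2)$), the Poincar\'e inequality controls the $\phi$-weighted $L^2$ mass of a field by $\tilde{R}^2$ times the $\phi$-weighted $L^2$ mass of its gradient; the factor $\tilde{R}^2$ in the display above is exactly the scaling of the Poincar\'e constant, and the weight $\phi$ confines all integrals to a neighborhood of the shell. Thus the displayed inequality has precisely the shape ``weighted energy $\le$ (Poincar\'e constant) $\times$ weighted enstrophy,'' and the presence of the factor $\gamma^2<1$ records that this inequality holds \emph{strictly} --- with room to spare --- rather than being saturated. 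Saturation would correspond to $\bfe_{\bfx_0,R_1,R_2}/\bfE_{\bfx_0,R_1,R_2}$ approaching the full $\tilde{R}^2$-threshold, i.e.\ to $\bfu$ behaving like a slowly varying (minimal-gradient) mode on the shell, which is a quiescent flow.

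The honest caveat --- and the reason for writing ``Poincar\'e-like'' rather than ``Poincar\'e'' --- is that $\bfu$ need not vanish on $\partial(\operatorname{supp}\phi)$ nor have zero mean there, so the \emph{unconditional} weighted bound $\bfe_{\bfx_0,R_1,R_2}\le C\tilde{R}^2\bfE_{\bfx_0,R_1,R_2}$ is simply false (it fails already for a locally constant $\bfu$, for which the right-hand side vanishes while the left-hand side does not; note that such a field is divergence-free, so $\nabla\cdot\bfu=0$ alone does not rescue the estimate). Consequently the step I expect to be the main --- and, strictly speaking, the only --- obstacle is to elevate the analogy beyond the formal level: to assert a genuine two-sided comparison $\bfe_{\bfx_0,R_1,R_2}\sim\tilde{R}^2\bfE_{\bfx_0,R_1,R_2}$ one would first have to subtract off the low-gradient (mean/harmonic) part of $\bfu$ on the shell, and nothing in the present setup forces $\bfu$ to be orthogonal to those modes. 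For the purpose of this remark it suffices to record the outcome of the first step: (\ref{scales_con2_fin}) places the ratio $\bfe_{\bfx_0,R_1,R_2}/\bfE_{\bfx_0,R_1,R_2}$ strictly below the $\tilde{R}^2$-threshold that saturation would require, which forces $|\nabla\otimes\bfu|$ to be large relative to $|\bfu|$ on the shell --- intense fluid activity in a neighborhood of $A(\bfx_0,R_1,R_2)$ --- exactly paralleling the interpretation of (\ref{scales_con_fin}) given in Remark \ref{Rem_4.2}.
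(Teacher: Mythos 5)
Your rearrangement of (\ref{scales_con2_fin}) into the weighted inequality $\frac{1}{T}\iint\frac{1}{2}|\bfu|^2\phi^{2\delta-1}\,d\bfx\,dt<\frac{\gamma^2}{C_0}\,\tilde{R}^2\,\frac{1}{T}\iint|\nabla\otimes\bfu|^2\phi\,d\bfx\,dt$ is exactly the justification the paper intends, being the shell analogue of the display in Remark \ref{Rem_4.2} with $R_0$ replaced by $\tilde{R}$ and $\phi_0$ by $\phi$. Your added caveat explaining why the inequality is only ``Poincar\'e-like'' (no vanishing or mean-zero condition on $\bfu$, so the unconditional bound fails, e.g., for locally constant fields) is correct and consistent with the paper's hedged phrasing, so the proposal matches the paper's approach.
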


\begin{obs}\label{Phi_low_bds2}
{\em Since $\Phi_{\bfx_0,R_1,R_2}^{\infty}\ge 0$, all the {\em
lower} bounds on the modified fluxes hold for the usual fluxes
$\Phi_{\bfx_\bfo,R_0,R_1}$; in particular, we have
\begin{equation}
\Phi_{\bfx_0,R_1,R_2}\ge \nu \bfE_{\bfx_0,R_1,R_2}\,
\left(1-C_0\frac{\tau^2_{\bfx_0,R_1,R_2}}{\tilde{R}^2}\right)\;
\end{equation}
and, provided (\ref{scales_con2_fin}) holds,
\begin{equation}
\Phi_{\bfx_\bfo,R_0,R_1}\ge (1-\gamma^2)\,\nu \bfE_{\bfx_0,R_1,R_2}\;.
\end{equation}

Also, if a solution $\bfu$ is such that the (\ref{loc_ene_ineq})
holds with equality and in particular, if the solution is {\em
regular}, then \emph{all} the estimates, including
(\ref{ener_casc1}), hold for the usual averaged flux
$\Phi_{\bfx_0,R_1,R_2}$. }
\end{obs}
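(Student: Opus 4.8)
The plan is to read off both claims directly from the defining relation between the usual flux and the modified flux, namely
\begin{equation}\label{Phi_Psi_rel}
\Phi_{\bfx_0,R_1,R_2}=\Psi_{\bfx_0,R_1,R_2}+\frac{1}{T}\Phi_{\bfx_0,R_1,R_2}^{\infty}\;,
\end{equation}
together with the sign of the anomalous flux. Since the local energy inequality (\ref{loc_ene_ineq}) forces $\Phi_{\bfx_0,R_1,R_2}^{\infty}\ge 0$ and $T>0$, relation (\ref{Phi_Psi_rel}) yields the comparison $\Phi_{\bfx_0,R_1,R_2}\ge\Psi_{\bfx_0,R_1,R_2}$ for every $\bfx_0$, $R_1$, $R_2$ in range. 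Consequently every \emph{lower} bound already established for $\Psi_{\bfx_0,R_1,R_2}$ transfers verbatim to $\Phi_{\bfx_0,R_1,R_2}$. Applying this to the lower bound obtained just before the preceding Proposition gives the first displayed estimate, and applying it to (\ref{low_bd2}) under the hypothesis (\ref{scales_con2_fin}) gives the second.

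For the final assertion I would argue that the equality case kills the anomalous flux outright. When (\ref{loc_ene_ineq}) holds with equality for the shell test function $\phi$, the defining identity (\ref{ener_eq2}) -- in which $\Phi_{\bfx_0,R_1,R_2}^{\infty}$ measures exactly the gap in the local energy balance -- forces $\Phi_{\bfx_0,R_1,R_2}^{\infty}=0$. Then (\ref{Phi_Psi_rel}) collapses to $\Phi_{\bfx_0,R_1,R_2}=\Psi_{\bfx_0,R_1,R_2}$, so \emph{all} the estimates for the modified flux, in particular the two-sided bound (\ref{ener_casc1}), hold with $\Phi_{\bfx_0,R_1,R_2}$ replacing $\Psi_{\bfx_0,R_1,R_2}$. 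Since a regular solution satisfies the local energy equality on $[0,2T]\times A(\bfx_0,2R_1,R_2/2)$, the regular case is subsumed.

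The point to watch -- rather than a genuine obstacle -- is the direction of (\ref{Phi_Psi_rel}): because $\Phi_{\bfx_0,R_1,R_2}\ge\Psi_{\bfx_0,R_1,R_2}$, only lower bounds descend from $\Psi$ to $\Phi$ for general suitable weak solutions, whereas the \emph{upper} bound in (\ref{ener_casc1}) genuinely needs the vanishing of $\Phi_{\bfx_0,R_1,R_2}^{\infty}$, hence the no-anomaly (equality) hypothesis. This asymmetry is precisely what separates the unconditional first part of the remark from its conditional second part.
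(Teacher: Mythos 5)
Your proposal is correct and follows exactly the reasoning the paper intends for this remark: the decomposition $\Phi_{\bfx_0,R_1,R_2}=\Psi_{\bfx_0,R_1,R_2}+\frac{1}{T}\Phi_{\bfx_0,R_1,R_2}^{\infty}$ with $\Phi_{\bfx_0,R_1,R_2}^{\infty}\ge 0$ transfers the lower bounds, and equality in (\ref{loc_ene_ineq}) forces $\Phi_{\bfx_0,R_1,R_2}^{\infty}=0$ via (\ref{ener_eq2}), so that $\Phi=\Psi$ and all estimates, including (\ref{ener_casc1}), carry over. Your closing observation on the asymmetry -- lower bounds descend unconditionally while the upper bound requires the vanishing of the anomalous flux -- correctly identifies the structure of the remark (and you rightly read the $\Phi_{\bfx_\bfo,R_0,R_1}$ appearing in the statement as a typo for $\Phi_{\bfx_0,R_1,R_2}$).
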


In order to further study the locality of the flux, we will estimate
the ensemble averages of the fluxes through the shells
$A(\bfx_i,2R,R)$ of thickness $\tilde{R}=R$. Since we are interested
in the shells inside $B(\bfo,R_0)$, we require the lattice points
$\bfx_i$ to satisfy
\begin{equation}\label{x_i_shells_con}
B(\bfx_i,R)\subset B(\bfo,R_0)\;.
\end{equation}
To each $A(\bfx_i,2R,R)$ we associate a test function
$\phi_i=\eta\psi_i$ where $\eta$ satisfies (\ref{eta_def}) and
$\psi_i$ satisfies the following.

If $A(\bfx_i,2R,R)\subset B(\bfo,R_0)$, then
$\psi_i\in\mathcal{D}(A(\bfx_i,4R,R/2))$ with
\begin{equation}\label{psi_shells_def}\begin{aligned}
\quad 0\le\psi_i\le\psi_0, \ \psi_i=1\ \mbox{on}\ A(\bfx_i,2R,R)\cap
B(\bfo,R_0), \
\frac{|\nabla\psi_i|}{\psi_i^{\delta}}\le\frac{C_0}{R}, \
\frac{|\triangle\psi_i|}{\psi_i^{2\delta-1}}\le\frac{C_0}{R^2}\;,
\end{aligned}
\end{equation}
and if  $A(\bfx_i,2R,R)\not\subset B(\bfo,R_0)$ (i.e. we have $B(\bfx_i,R)\subset B(\bfo,R_0)$
and $B(\bfx_i,2R)\setminus B(\bfo,R_0)\not=\emptyset$), then
$\psi_i\in\mathcal{D}(B(\bfo,2R_0))$ with $\psi_i=1\ \mbox{on}\ A(\bfx_0,2R,R)
\cap B(\bfo,R_0)$ satisfying, in addition to (\ref{psi_shells_def}), the
following:
\begin{equation}\label{psi_shells_def_add1}
\begin{aligned}
&
\psi_i=\psi_0\ \mbox{on the part of the cone in}\ \mR3\ \mbox{centered at
zero and passing}\\ &\mbox{ through}\   S(\bfo,R_0)\cap B(\bfx_i,2R)\
\mbox{between}\  S(\bfo,R_0)\ \mbox{and}\
S(\bfo,2R_0)
\end{aligned}
\end{equation}
and
\begin{equation}\label{psi_shells_def_add2}
\begin{aligned}
&
\psi_i=0\ \mbox{on}\ B(\bfo,R_0)\setminus A(\bfx_i,4R,R/2)\ \mbox{and outside the part of the}\\ &
\mbox{cone in}\ \mR3\ \mbox{centered at zero and passing through}\ S(\bfo,R_0)\cap B(\bfx_i,4R)\\
 &
 \mbox{between}\  S(\bfo,R_0)\ \mbox{and}\ S(\bfo,2R_0).
\end{aligned}
\end{equation}

Figure \ref{shall_fig} illustrates the definition of $\psi_i$ in the case
$A(\bfx_i,2R,R)$ is not entirely contained in
$B(\bfo,R_0)$.

\begin{figure}
  \centerline{\includegraphics[scale=1, viewport=173 461 509 669, clip] {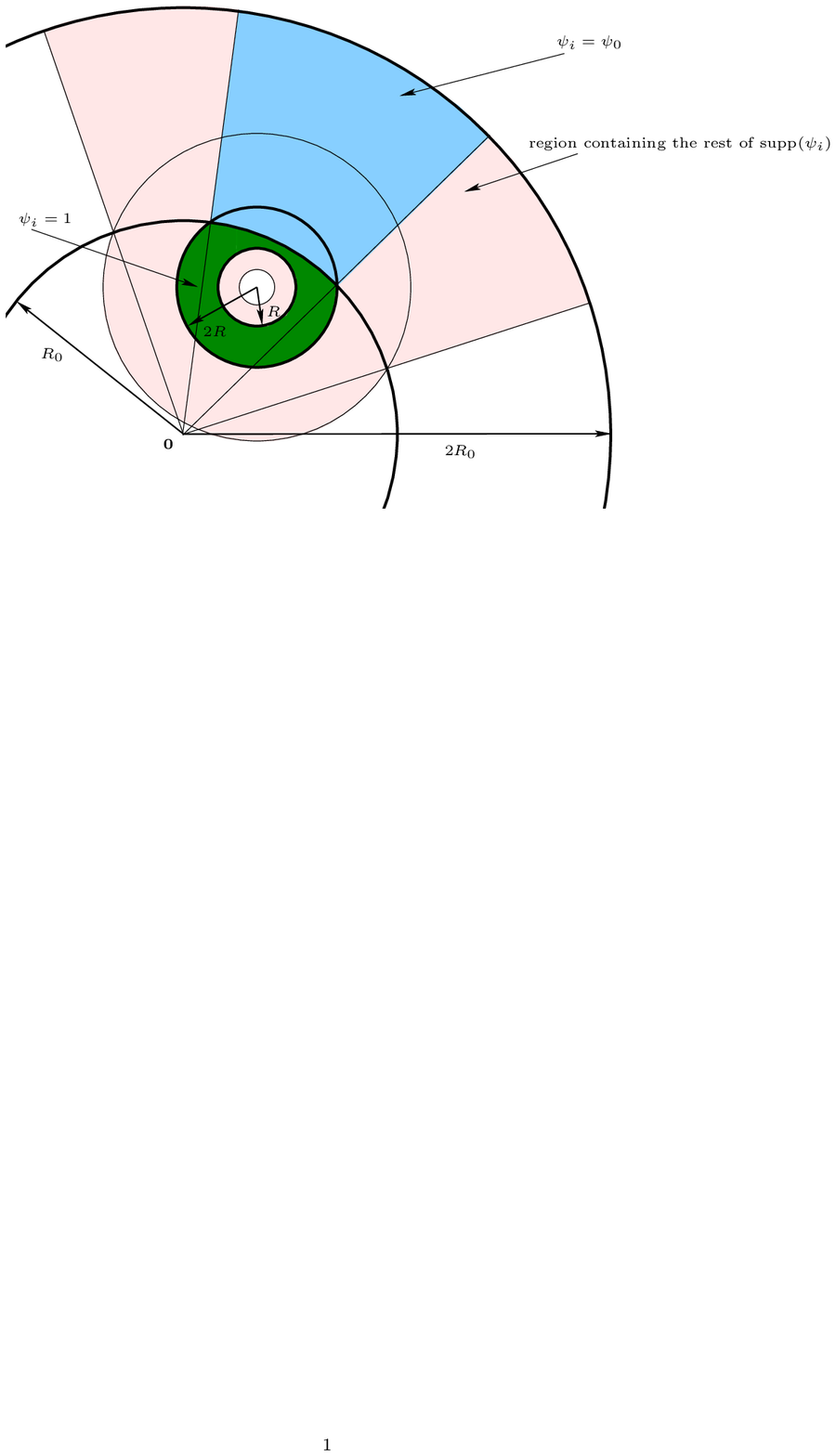}}
  \caption{A cross-section of regions of supp$(\psi_i)$ in the case $A(\bfx_i,2R,R)\not\subset B(\bfo,R_0)$.}
  \label{shall_fig}
\end{figure}

Similarly as in the previous section, we consider {\em optimal}
coverings of $B(\bfo,R_0)$ by shells $\{A(\bfx_i,2R,R)\}_{i=1,n}$
such that (\ref{x_i_shells_con}) is satisfied,
\begin{equation}\label{shells_n_con1}
\left(\frac{R_0}{R}\right)^3\le n\le K_1\left(\frac{R_0}{R}\right)^3
\end{equation}
and
\begin{equation}\label{Shells_n_con2}
\mbox{any}\  \bfx\in B(\bfo,R_0)\  \mbox{is covered by at most}\  K_2\
\mbox{shells}\ A(\bfx_i,4R,R/2)\;.
\end{equation}

Introduce
\begin{equation}
\tilde{\bfe}_{2R,R}=\frac{1}{n}\sum\limits_{i=1}^{n}e_{\bfx_i,2R,R}\;,
\end{equation}
\begin{equation}
\tilde{\bfE}_{2R,R}=\frac{1}{n}\sum\limits_{i=1}^{n}E_{\bfx_i,2R,R}\;,
\end{equation}
and
\begin{equation}
\tilde{\Phi}_{2R,R}=\frac{1}{n}\sum\limits_{i=1}^{n}\Phi_{\bfx_i,2R,R}\;,
\end{equation}
the ensemble averages of the time-averaged energy, enstrophy, and
flux on the shells of thickness $R$ corresponding to the covering
$\{A(\bfx_i,2R,R)\}_{i=1,n}$\,.

The ensemble average of the time-averaged modified flux
on shells of thickness ${R}$ is then defined by
\begin{equation}
\tilde{\Psi}_{2R,R}=\frac{1}{n}\sum\limits_{i=1}^{n}\Psi_{\bfx_i,2R,R}=
\tilde{\Phi}_{2R,R}- \tilde{\Phi}_{2R,R}^{\infty}\;
\end{equation}
where
\begin{equation}
\tilde{\Phi}_{2R,R}^{\infty}=\frac{1}{n}\sum\limits_{i=1}^{n}\frac{1}{T}\Phi_{\bfx_i,2R,R}^{\infty}\;
\end{equation}
is the ensemble average of the time-averaged anomalous fluxes through the shells
of thickness $R$ inside $B(\bfo,R_0)$.

An argument analogous to the one in Lemma \ref{anom_flux_lem} implies that if the
covering of $B(\bfo,R_0)$ is optimal, then
\begin{equation}
\tilde{\Phi}_{2R,R}^{\infty}\le K\frac{1}{T}\Phi_{\infty}\;.
\end{equation}

Taking the time ensemble averages in (\ref{ener_eq2}) and applying
the bounds (\ref{phi_est1}-\ref{phi_est2}), we arrive at
\begin{equation}\label{low_R12_bd}
\tilde{\Psi}_{2R,R}\ge \nu\tilde{\bfE}_{2R,R}-\nu\frac{C_0}{R^2}\,\tilde{\bfe}_{2R,R}\;.
\end{equation}

If the covering is optimal, i.e., if
(\ref{x_i_shells_con}) and (\ref{shells_n_con1}-\ref{Shells_n_con2}) hold, then
\begin{equation}\label{E_R12_E_ineq}
\tilde{\bfE}_{2R,R}\ge \frac{1}{n}\tilde{\bfE}\ge \frac{1}{K_1}\left(\frac{R}{R_0}\right)^{3}\tilde{\bfE}
\end{equation}
and
\begin{equation}\label{e_R12_e_ineq}
\tilde{\bfe}_{2R,R}\le \frac{K_2}{n}\tilde{\bfe}\le
K_2\left(\frac{R}{R_0}\right)^3\tilde{\bfe}\;,
\end{equation}
where
\begin{equation}
\tilde{\bfE}=\frac{1}{T}\iint
|\nabla\otimes\bfu|^2\phi_0\,d\bfx\,dt=R_0^3\,\bfE\;
\end{equation}
is the time average of the localized enstrophy on $B(\bfo,R_0)$ and
\begin{equation}
\tilde{\bfe}=\frac{1}{2}\frac{1}{T}\iint
|\bfu|^2\phi_0^{2\delta-1}\,d\bfx\,dt=R_0^3\, \bfe\;
\end{equation}
is the time average of  the localized energy on $B(\bfo,R_0)$ with
$\phi_0$ is defined by (\ref{phi0}).

Let us note that
\begin{equation}\label{tau_obs}
\tau_0=\left(\frac{\bfe}{\bfE}\right)^{1/2}=\left(\frac{\tilde{\bfe}}{\tilde{\bfE}}\right)^{1/2}\;.
\end{equation}

Utilizing (\ref{E_R12_E_ineq}), (\ref{e_R12_e_ineq}) and
(\ref{tau_obs}) in the inequality (\ref{low_R12_bd}) gives
\begin{equation}\label{Psi_R12_low}
\tilde{\Psi}_{2R,R}\ge
\frac{1}{K_1}\left(\frac{R}{R_0}\right)^3\,\nu\tilde{\bfE}
\left(1-C_0K_1K_2\frac{\tau_0^2}{R^2}\right)\;,
\end{equation}
implying the following result.

\begin{prop}
Assume that the condition (\ref{scales_con_fin}) holds for some
$0<\gamma<1$. Then, for any $R$ satisfying (\ref{inert_range}),
\begin{equation}
c_{0,\gamma}\left(\frac{R}{R_0}\right)^3\,\nu\tilde{\bfE}\le\tilde{\Psi}_{2R,R}\;
\end{equation}
holds with $c$ and $c_{0,\gamma}$ defined in
(\ref{c_con1}) and (\ref{c_con2}).
\end{prop}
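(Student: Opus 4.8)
The plan is to read off the claimed lower bound directly from the inequality (\ref{Psi_R12_low}), which has already been established, by using the inertial-range hypothesis (\ref{inert_range}) to control the dissipative correction factor. Essentially all of the analytic work — the local energy inequality (\ref{ener_eq2}), the derivative bounds (\ref{phi_est1})--(\ref{phi_est2}), the optimal-covering estimates (\ref{E_R12_E_ineq}) and (\ref{e_R12_e_ineq}), and the identity (\ref{tau_obs}) relating the shell averages to $\tau_0$ — has already been absorbed into (\ref{Psi_R12_low}). Thus the proof reduces to a short algebraic step, entirely parallel to the way Theorem \ref{balls_thm} was deduced from the proposition preceding it in Section \ref{balls}.

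Concretely, I would start from (\ref{Psi_R12_low}),
\[
\tilde{\Psi}_{2R,R}\ge \frac{1}{K_1}\left(\frac{R}{R_0}\right)^3\,\nu\tilde{\bfE}\left(1-C_0K_1K_2\frac{\tau_0^2}{R^2}\right)\;,
\]
and then invoke the left-hand inequality of the inertial-range condition (\ref{inert_range}), namely $R\ge \tau_0/(c\gamma)$, together with the definition (\ref{c_con1}) of $c$, which gives $c^2=1/(C_0K_1K_2)$. From these I would estimate
\[
C_0K_1K_2\,\frac{\tau_0^2}{R^2}\le C_0K_1K_2\,(c\gamma)^2=\gamma^2\;,
\]
so that $1-C_0K_1K_2\,\tau_0^2/R^2\ge 1-\gamma^2$. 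Substituting this into the displayed bound and recalling the definition (\ref{c_con2}) of $c_{0,\gamma}=(1-\gamma^2)/K_1$ yields
\[
\tilde{\Psi}_{2R,R}\ge \frac{1-\gamma^2}{K_1}\left(\frac{R}{R_0}\right)^3\,\nu\tilde{\bfE}=c_{0,\gamma}\left(\frac{R}{R_0}\right)^3\,\nu\tilde{\bfE}\;,
\]
which is exactly the asserted estimate.

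The only point that needs a moment's attention is to confirm that the standing hypothesis (\ref{scales_con_fin}) makes the admissible range (\ref{inert_range}) nonempty, so that such an $R$ exists at all; but this is already recorded in Theorem \ref{balls_thm}, where the same condition governs the ball-based cascade. I do not anticipate any genuine obstacle here: the substantive work was completed in deriving (\ref{Psi_R12_low}), and the remaining manipulation merely converts the range hypothesis into sign-definiteness of the enstrophy-dominated factor. The only structural difference from the ball case is the extra geometric factor $(R/R_0)^3$, which reflects that the shells $A(\bfx_i,2R,R)$ occupy a volume fraction of order $(R/R_0)^3$ of $B(\bfo,R_0)$; this factor is already built into (\ref{E_R12_E_ineq})--(\ref{e_R12_e_ineq}) and simply rides through the computation untouched.
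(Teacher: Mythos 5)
Your proof is correct and matches the paper's own argument: the paper presents the Proposition as an immediate consequence of (\ref{Psi_R12_low}), obtained exactly as you do by using the lower end of the inertial range (\ref{inert_range}) together with $c^2=1/(C_0K_1K_2)$ to bound the factor $1-C_0K_1K_2\,\tau_0^2/R^2$ below by $1-\gamma^2$, and then invoking the definition of $c_{0,\gamma}$ in (\ref{c_con2}). No gaps; your remark about nonemptiness of the range is a harmless extra observation.
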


Taking the time ensemble averages in the localized energy equality
(\ref{ener_eq2}) again, this time looking for an upper bound, yields
\[
\tilde{\Psi}_{2R,R}\le \nu\tilde{\bfE}_{2R,R}+\nu\frac{C_0}{R^2}\tilde{\bfe}_{2R,R}\;.
\]
If the covering $\{A(\bfx_i,2R,R)\}_{i=1,n}$ of $B(\bfo,R_0)$ is
optimal then, in addition to (\ref{e_R12_e_ineq}),
\begin{equation}\label{e_R12_ineq}
\tilde{\bfE}_{2R,R}\le \frac{K_2}{n}\tilde{\bfE}\le
K_2\left(\frac{R}{R_0}\right)^3\tilde{\bfE}\;;
\end{equation}
hence,
\[
\tilde{\Psi}_{2R,R}\le \nu K_2\left(\frac{R}{R_0}\right)^3\tilde{\bfE}
+\nu K_2\frac{C_0}{R^2}\left(\frac{R}{R_0}\right)^3\tilde{\bfe}\;.
\]

As long as $R$ is inside the inertial range delineated in
(\ref{inert_range}),
\[
\tilde{e}\le c^2\gamma^2R^2\tilde{\bfE}, \;
\]
leading to
\[
\tilde{\Psi}_{2R,R}\le \nu K_2\left(\frac{R}{R_0}\right)^3\tilde{\bfE}
+\nu K_2{C_0}\left(\frac{R}{R_0}\right)^3c^2\gamma^2\tilde{\bfE}=
c_{1,\gamma}\left(\frac{R}{R_0}\right)^3\,\nu\tilde{\bfE}\;.
\]

Collecting the bounds on $\tilde{\Psi}_{2R,R}$ we establish the
following.

\begin{thm}\label{shells_thm}
Assume that the condition (\ref{scales_con_fin}) holds for some
$0<\gamma<1$. Then, for any $R$ satisfying (\ref{inert_range}), the
ensemble average of the time-averaged modified flux through the
shells of thickness $R$, $\tilde{\Psi}_{2R,R}$,  satisfies
\begin{equation}\label{ener_casc2}
c_{0,\gamma}\left(\frac{R}{R_0}\right)^3\,\nu\tilde{\bfE}\le\tilde{\Psi}_{2R,R}\le
c_{1,\gamma} \left(\frac{R}{R_0}\right)^3\,\nu\tilde{\bfE}\;
\end{equation}
where $c$, $c_{0,\gamma}$, and $c_{1,\gamma}$ are defined in (\ref{c_con1})
and (\ref{c_con2})
and the average is computed over a time interval $[0,2T]$ with $T\ge R_0^2/\nu$
and determined by an optimal covering $\{A(\bfx_i,2R,R)\}_{i=1,n}$ of $B(\bfo,R_0)$
(i.e. satisfying (\ref{x_i_shells_con}), (\ref{shells_n_con1}), and (\ref{Shells_n_con2})).
\end{thm}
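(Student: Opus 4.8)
The plan is to collect the two one-sided bounds that have already been derived immediately before the statement and package them into the two-sided inequality (\ref{ener_casc2}). Both bounds flow from taking time-space ensemble averages in the localized energy equality (\ref{ener_eq2}) over the shell covering, so the structure mirrors Theorem \ref{balls_thm} almost verbatim, with the key new feature being the extra geometric factor $(R/R_0)^3$ coming from the fact that shells of thickness $R$ occupy only a $(R/R_0)^3$-fraction of the volume budget reflected in the optimality bounds (\ref{E_R12_E_ineq}) and (\ref{e_R12_e_ineq}).

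For the lower bound I would start from the inequality (\ref{low_R12_bd}),
\[
\tilde{\Psi}_{2R,R}\ge \nu\tilde{\bfE}_{2R,R}-\nu\frac{C_0}{R^2}\,\tilde{\bfe}_{2R,R},
\]
then insert the optimality estimates $\tilde{\bfE}_{2R,R}\ge \frac{1}{K_1}(R/R_0)^3\tilde{\bfE}$ and $\tilde{\bfe}_{2R,R}\le K_2 (R/R_0)^3\tilde{\bfe}$ from (\ref{E_R12_E_ineq}) and (\ref{e_R12_e_ineq}), together with the identity $\tau_0^2=\tilde{\bfe}/\tilde{\bfE}$ in (\ref{tau_obs}), to arrive at (\ref{Psi_R12_low}),
\[
\tilde{\Psi}_{2R,R}\ge \frac{1}{K_1}\left(\frac{R}{R_0}\right)^3\nu\tilde{\bfE}\left(1-C_0K_1K_2\frac{\tau_0^2}{R^2}\right).
\]
Invoking the hypothesis (\ref{scales_con_fin}) with $c$ as in (\ref{c_con1}) forces $C_0K_1K_2\,\tau_0^2/R^2\le \gamma^2$ throughout the inertial range (\ref{inert_range}), so the parenthetical factor is at least $1-\gamma^2$ and the lower half of (\ref{ener_casc2}) follows with $c_{0,\gamma}=(1-\gamma^2)/K_1$.

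For the upper bound I would again start from (\ref{ener_eq2}), but now drop the anomalous flux $\Phi^\infty_{\bfx_i,2R,R}\ge 0$ in the favorable direction to obtain $\tilde{\Psi}_{2R,R}\le \nu\tilde{\bfE}_{2R,R}+\nu\frac{C_0}{R^2}\tilde{\bfe}_{2R,R}$, then apply the remaining optimality estimate (\ref{e_R12_ineq}), $\tilde{\bfE}_{2R,R}\le K_2 (R/R_0)^3\tilde{\bfE}$, along with (\ref{e_R12_e_ineq}). Rewriting the energy term using $\tilde{\bfe}=\tau_0^2\,\tilde{\bfE}\le c^2\gamma^2 R^2\,\tilde{\bfE}$ (valid since $R$ lies in the inertial range) collapses the second term into the same $(R/R_0)^3\nu\tilde{\bfE}$ scale, giving the upper half of (\ref{ener_casc2}) with $c_{1,\gamma}=K_2[1+\gamma^2/(K_1K_2)]$. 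Combining the two halves yields the theorem.

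The genuinely delicate point is not the arithmetic but ensuring that the shell-based optimality estimates (\ref{E_R12_E_ineq})--(\ref{e_R12_ineq}) hold with the stated constants; these rest on the covering conditions (\ref{shells_n_con1}) and (\ref{Shells_n_con2}) and, crucially, on the boundary-layer prescriptions (\ref{psi_shells_def_add1})--(\ref{psi_shells_def_add2}) for shells meeting $S(\bfo,R_0)$, exactly as flagged in Remark \ref{E'_rem1}. Without those cone conditions the lower enstrophy bound would degrade to the non-localized $E'$, so the main obstacle I would watch is verifying that summing the $\phi_i$ over the covering reproduces a controlled multiple of $\phi_0$ (the analogue of the anomalous-flux argument in Lemma \ref{anom_flux_lem}), which is what legitimizes pulling the factor $(R/R_0)^3$ out cleanly. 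Everything downstream of that is a routine substitution identical in spirit to Theorem \ref{balls_thm}.
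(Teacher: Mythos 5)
Your proposal is correct and follows the paper's own proof essentially verbatim: the lower bound comes from (\ref{low_R12_bd}) combined with the optimality estimates (\ref{E_R12_E_ineq})--(\ref{e_R12_e_ineq}) and the identity (\ref{tau_obs}) to give (\ref{Psi_R12_low}), then the hypothesis (\ref{scales_con_fin}) within the range (\ref{inert_range}); and the upper bound comes from averaging (\ref{ener_eq2}) with the test-function estimates (\ref{phi_est1})--(\ref{phi_est2}), then (\ref{e_R12_ineq}), (\ref{e_R12_e_ineq}) and $\tilde{\bfe}\le c^2\gamma^2R^2\tilde{\bfE}$, exactly as in the paper. The only minor imprecision is that the upper bound for the \emph{modified} flux $\tilde{\Psi}_{2R,R}$ follows directly from the equality (\ref{ener_eq2}) (the anomalous flux is already absorbed into $\tilde{\Psi}_{2R,R}$ by definition), rather than from ``dropping'' $\Phi^{\infty}_{\bfx_i,2R,R}\ge 0$, but this does not affect the argument.
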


Note that if
\[
\Psi_{2R,R}=\frac{1}{R^3}\tilde{\Psi}_{2R,R}
\]
denotes the ensemble average of the time-\emph{space} averaged
modified flux through the shells of thickness $R$ then, dividing
(\ref{ener_casc2}) by $R^{3}$, we obtain the following.

\begin{cor}
Under the conditions of the previous theorem,
\begin{equation}\label{ener_casc3}
c_{0,\gamma}\nu \bfE\le\Psi_{2R,R}\le c_{1,\gamma} \nu \bfE\;.
\end{equation}
\end{cor}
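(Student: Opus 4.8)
The plan is to derive the stated bounds directly from Theorem \ref{shells_thm} by a single rescaling, so no new estimates are needed. First I would recall the conclusion established there: under hypothesis (\ref{scales_con_fin}) and for any $R$ in the inertial range (\ref{inert_range}),
\[
c_{0,\gamma}\left(\frac{R}{R_0}\right)^3\nu\tilde{\bfE}\le\tilde{\Psi}_{2R,R}\le c_{1,\gamma}\left(\frac{R}{R_0}\right)^3\nu\tilde{\bfE}\;.
\]
The flux $\Psi_{2R,R}$ appearing in the corollary differs from $\tilde{\Psi}_{2R,R}$ only by the factor $1/R^3$ that passes from the time-averaged flux to the time-\emph{space}-averaged flux, which is precisely the content of the definition $\Psi_{2R,R}=R^{-3}\tilde{\Psi}_{2R,R}$.

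The key (and essentially only) step is to divide the displayed chain of inequalities by $R^3$, obtaining
\[
c_{0,\gamma}\frac{1}{R_0^3}\nu\tilde{\bfE}\le\Psi_{2R,R}\le c_{1,\gamma}\frac{1}{R_0^3}\nu\tilde{\bfE}\;.
\]
It then remains to replace $\tilde{\bfE}$ by $\bfE$. I would invoke the identity $\tilde{\bfE}=R_0^3\,\bfE$ recorded just before Theorem \ref{shells_thm}; this gives $R_0^{-3}\tilde{\bfE}=\bfE$ and collapses both sides to the asserted form $c_{0,\gamma}\nu\bfE\le\Psi_{2R,R}\le c_{1,\gamma}\nu\bfE$.

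There is no substantive obstacle: the corollary is a bookkeeping restatement of Theorem \ref{shells_thm} after normalizing by the shell volume $R^3$ and using the relation between the localized enstrophy average $\tilde{\bfE}$ and its per-unit-volume counterpart $\bfE$. The one point worth verifying is that the factor $(R/R_0)^3$ in the theorem cancels cleanly against the $R^{-3}$ normalization and the $R_0^3$ appearing in $\tilde{\bfE}$, which it does exactly, so that the resulting bounds are genuinely independent of $R$ throughout the inertial range. This $R$-independence is the whole point of the statement: it exhibits the near-constancy of the space-averaged modified flux across the inertial range in the shell geometry, in complete parallel with the ball geometry of Theorem \ref{balls_thm}.
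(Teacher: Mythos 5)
Your proposal is correct and is exactly the paper's own argument: divide the two-sided bound (\ref{ener_casc2}) of Theorem \ref{shells_thm} by $R^3$, use the definition $\Psi_{2R,R}=\tilde{\Psi}_{2R,R}/R^3$, and cancel via the identity $\tilde{\bfE}=R_0^3\,\bfE$. Nothing is missing; the cancellation of the $(R/R_0)^3$ factor is the whole content, just as you observe.
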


Theorem \ref{shells_thm} allows us to show locality of the
time-averaged modified flux under the assumption
(\ref{scales_con_fin}).  Indeed, the ensemble average of the
time-averaged flux through the spheres of radius $R$ satisfying
(\ref{inert_range}) is
\[
\tilde{\Psi}_R=R^3\Psi_R\;.
\]
According to Theorem \ref{balls_thm},
\[
c_{0,\gamma}\left(\frac{R}{R_0}\right)^3\,\nu\tilde{\bfE}\le\tilde{\Psi}_R\le
c_{1,\gamma} \left(\frac{R}{R_0}\right)^3\,\nu\tilde{\bfE}\;.
\]
On the other hand, the time ensemble average of the flux through the
shells between spheres of radii $R_2$ and $2R_2$, according to
Theorem \ref{shells_thm} is
\[
c_{0,\gamma}\left(\frac{R_2}{R_0}\right)^3\,\nu\tilde{\bfE}\le\tilde{\Psi}_{2R_2,R_2}\le
c_{1,\gamma} \left(\frac{R_2}{R_0}\right)^3\,\nu\tilde{\bfE}\;.
\]
Consequently,
\begin{equation}\label{time_locality}
\frac{c_{0,\gamma}}{c_{1,\gamma}}\left(\frac{R_2}{R}\right)^3\le
\frac{\tilde{\Psi}_{2R_2,R_2}}{\tilde{\Psi}_R}\le\frac{c_{1,\gamma}}{c_{0,\gamma}}
\left(\frac{R_2}{R}\right)^3\;.
\end{equation}

Thus, under the assumption (\ref{scales_con_fin}), throughout the
inertial range given by (\ref{inert_range}), the contribution of the
shells at scales comparable to $R$ is  comparable to the total flux
at scales $R$, the contribution of the the shells at scales $R_2$
much smaller than $R$ becomes negligible (ultraviolet locality) and
the flux through the shells at scales $R_2$ much bigger than $R$
becomes substantially bigger and thus essentially uncorrelated to
the flux at scales $R$ (infrared locality).

Moreover, if we choose $R_2=2^kR$ with $k$ an integer, the relation (\ref{time_locality})
becomes
\begin{equation}\label{exp_time_locality}
\frac{c_{0,\gamma}}{c_{1,\gamma}}2^{3k}\le
\frac{\tilde{\Psi}_{2^{k+1}R,2^k{R}}}{\tilde{\Psi}_R}\le\frac{c_{1,\gamma}}{c_{0,\gamma}}
2^{3k}
\end{equation}
which implies that the aforementioned manifestations of locality propagate
\emph{exponentially} in the shell number $k$.

In contrast to (\ref{time_locality}), since $\tilde{\bfe}=R_0^3\bfe$, $\tilde{\bfE}=R_0^3\bfE$,
$\tilde{\Psi}_{2R_2,R_2}=R_2^3{\Psi}_{2R_2,R_2}$ and $\tilde{\Psi}_R=R^3{\Psi}_R$,
\begin{equation}\label{space_time_locality}
\frac{c_{0,\gamma}}{c_{1,\gamma}}\le
\frac{{\Psi}_{2R_2,R_2}}{{\Psi}_R}\le\frac{c_{1,\gamma}}{c_{0,\gamma}}\;,
\end{equation}
i.e., the ensemble averages of the time-\emph{space} averaged
modified fluxes of the flows satisfying (\ref{scales_con_fin}) are
comparable throughout the scales involved in the inertial range
(\ref{inert_range}) which is consistent with the universality of the
cascade.

We conclude this section by noticing that the remarks similar to those at the
end of section \ref{balls}
can be applied here. Namely we have the following.

\begin{obs}\label{Phi_low_bds3}
{\em
Since $\tilde{\Phi}_{2R,R}^{\infty}\ge 0$, then all the {\em lower}
bounds on modified fluxes hold for the usual fluxes
$\tilde\Phi_{2R,R}$ and $\Phi_{2R,R}$; in particular, we have
\begin{equation}
\tilde{\Phi}_{2R,R}\ge c_1\left(\frac{R}{R_0}\right)^3\nu \tilde{\bfE}\,\left(1-c_2\frac{\tau_0^2}{R^2}\right)
\end{equation}
and, provided (\ref{scales_con_fin}) and (\ref{inert_range}) hold,
\begin{equation}
\tilde{\Phi}_{2R,R}\ge c_{0,\gamma}\left(\frac{R}{R_0}\right)^3\nu \tilde{\bfE}\;.
\end{equation}

Also, if a solution $\bfu$ is such that (\ref{loc_ene_ineq}) holds
with equality and in particular, if the solution is {\em regular},
then \emph{all} the estimates, including (\ref{ener_casc2}) and
(\ref{ener_casc3}), hold for the usual averaged fluxes
$\tilde{\Phi}_{2R,R}$ and $\Phi_{2R,R}=\tilde{\Phi}_{2R,R}/R^3$. }
\end{obs}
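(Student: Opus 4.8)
The plan is to transfer the bounds already established for the modified shell flux $\tilde{\Psi}_{2R,R}$ to the genuine shell flux $\tilde{\Phi}_{2R,R}$ (and thence to $\Phi_{2R,R}$), exploiting only the definite sign of the averaged anomalous flux, exactly in the spirit of Remarks \ref{Phi_low_bds} and \ref{Phi_low_bds2}. The single structural fact I would use is the decomposition $\tilde{\Psi}_{2R,R}=\tilde{\Phi}_{2R,R}-\tilde{\Phi}_{2R,R}^{\infty}$ together with $\tilde{\Phi}_{2R,R}^{\infty}\ge0$, which immediately yields $\tilde{\Phi}_{2R,R}\ge\tilde{\Psi}_{2R,R}$. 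Thus every \emph{lower} bound for $\tilde{\Psi}_{2R,R}$ passes verbatim to $\tilde{\Phi}_{2R,R}$, while the matching \emph{upper} bound does not, since $\tilde{\Phi}_{2R,R}$ can only exceed $\tilde{\Psi}_{2R,R}$.

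Carrying this out, I would first recall inequality (\ref{Psi_R12_low}), which upon substituting $c_1=1/K_1$ and $c_2=C_0K_1K_2$ reads $\tilde{\Psi}_{2R,R}\ge c_1(R/R_0)^3\,\nu\tilde{\bfE}\,(1-c_2\tau_0^2/R^2)$; combined with $\tilde{\Phi}_{2R,R}\ge\tilde{\Psi}_{2R,R}$ this gives the first displayed inequality of the remark. Next, restricting $R$ to the inertial range (\ref{inert_range}) under the hypothesis (\ref{scales_con_fin}) forces $c_2\tau_0^2/R^2\le\gamma^2$, so the factor $(1-c_2\tau_0^2/R^2)$ is bounded below by $1-\gamma^2$ and the estimate collapses to $\tilde{\Phi}_{2R,R}\ge c_{0,\gamma}(R/R_0)^3\,\nu\tilde{\bfE}$ with $c_{0,\gamma}=(1-\gamma^2)/K_1$, precisely as in the lower half of Theorem \ref{shells_thm}. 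Dividing through by $R^3$ and using $\tilde{\bfE}=R_0^3\bfE$ transfers these same lower bounds to $\Phi_{2R,R}=\tilde{\Phi}_{2R,R}/R^3$.

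For the regularity statement I would argue that if equality holds in the local energy inequality (\ref{loc_ene_ineq}) throughout $[0,2T]\times B(\bfo,2R_0)$ -- in particular if the solution is regular there -- then every localized anomalous flux vanishes, hence $\tilde{\Phi}_{2R,R}^{\infty}=0$ and $\tilde{\Phi}_{2R,R}=\tilde{\Psi}_{2R,R}$ identically. In that case the full two-sided estimate (\ref{ener_casc2}) for the modified flux becomes an estimate for the genuine flux $\tilde{\Phi}_{2R,R}$, and dividing by $R^3$ yields (\ref{ener_casc3}) for $\Phi_{2R,R}$. I do not expect any genuine obstacle: the entire argument rests on the nonnegativity $\tilde{\Phi}_{2R,R}^{\infty}\ge0$ furnished by (\ref{loc_ene_ineq}), and the only care required is the routine bookkeeping of the constants $c_1,c_2,c_{0,\gamma},c_{1,\gamma}$ and of the scale factor $(R/R_0)^3$, all of which are already fixed by (\ref{Psi_R12_low}) and Theorem \ref{shells_thm}.
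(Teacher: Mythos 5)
Your proposal is correct and follows essentially the same route as the paper: the remark rests precisely on the decomposition $\tilde{\Psi}_{2R,R}=\tilde{\Phi}_{2R,R}-\tilde{\Phi}_{2R,R}^{\infty}$ with $\tilde{\Phi}_{2R,R}^{\infty}\ge 0$, so lower bounds from (\ref{Psi_R12_low}) and Theorem \ref{shells_thm} transfer to $\tilde{\Phi}_{2R,R}$ (and, after dividing by $R^3$ and using $\tilde{\bfE}=R_0^3\bfE$, to $\Phi_{2R,R}$), while equality in (\ref{loc_ene_ineq}) kills the anomalous fluxes and makes all two-sided estimates, including (\ref{ener_casc2}) and (\ref{ener_casc3}), hold for the usual fluxes. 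Your bookkeeping of the constants $c_1$, $c_2$, $c_{0,\gamma}$, $c_{1,\gamma}$ and of the inertial-range restriction $c_2\tau_0^2/R^2\le\gamma^2$ matches the paper's exactly.
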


\begin{obs}\label{E'_rem3}{\em
If the additional assumptions (\ref{psi_shells_def_add1}) and (\ref{psi_shells_def_add2})
for the test functions on the shells $A(\bfx_i,2R,R)$ which are not contained
entirely in $B(\bfo,R_0)$ are not imposed, then the lower bounds in
(\ref{Psi_R12_low}) and (\ref{ener_casc2}) hold with $\tilde{\bfE}$ replaced
by the time average of  the {\em{non-localized}} in space enstrophy on $B(\bfo, R_0)$,
\[
\tilde{E}'=\frac{1}{T}\int\limits_0^{2T}
\int\limits_{B(\bfo,R_0)}|\nabla\otimes\bfu|^2\eta\,d\bfx\,dt=R_0^3E'\;.
\]
This is the case because the estimate (\ref{E_R12_E_ineq}) gets replaced with
\[\tilde{\bfE}_{2R,R}\ge\frac{1}{K_1}\left(\frac{R}{R_0}\right)^3\tilde{E}'\;.\]

Also, the estimates (\ref{time_locality}) and
(\ref{space_time_locality}) will contain the terms
$E'/\bfE(=\tilde{E}'/\tilde{\bfE})$ in the lower and $\bfE/E'$ in the upper
bounds. }\end{obs}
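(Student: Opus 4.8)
The plan is to isolate the single place where the boundary conditions (\ref{psi_shells_def_add1})--(\ref{psi_shells_def_add2}) were actually used in establishing Theorem \ref{shells_thm}, and then to re-run that one estimate without them. Tracing the argument, those conditions enter \emph{only} through the enstrophy lower bound (\ref{E_R12_E_ineq}): there they guarantee the pointwise domination $\sum_i \psi_i \ge \psi_0$ on all of $B(\bfo,2R_0)$, the matching $\psi_i=\psi_0$ in the outer layer $B(\bfo,2R_0)\setminus B(\bfo,R_0)$ being exactly what supplies the bound there. From $\sum_i \phi_i \ge \phi_0$ one then gets $\tilde{\bfE}_{2R,R}=\frac{1}{n}\frac{1}{T}\iint |\nabla\otimes\bfu|^2\bigl(\sum_i\phi_i\bigr)\,d\bfx\,dt \ge \frac{1}{n}\tilde{\bfE}$. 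Every other ingredient -- the gradient and Laplacian bounds (\ref{phi_est1})--(\ref{phi_est2}), the energy upper bound (\ref{e_R12_e_ineq}) (which uses only $\psi_i\le\psi_0$), and the Taylor-scale identity (\ref{tau_obs}) -- is untouched by dropping (\ref{psi_shells_def_add1})--(\ref{psi_shells_def_add2}).

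First I would re-establish the analogue of (\ref{E_R12_E_ineq}) without the outer-layer matching. For any $\bfx\in B(\bfo,R_0)$, optimality of the covering furnishes an index $i$ with $\bfx\in A(\bfx_i,2R,R)$, and since also $\bfx\in B(\bfo,R_0)$ the property $\psi_i=1$ on $A(\bfx_i,2R,R)\cap B(\bfo,R_0)$ from (\ref{psi_shells_def}) gives $\psi_i(\bfx)=1$; hence $\sum_i\psi_i\ge 1$ on $B(\bfo,R_0)$ irrespective of the boundary conditions. Consequently $\sum_i\phi_i\ge\eta\,\mathbf{1}_{B(\bfo,R_0)}$, and integrating $|\nabla\otimes\bfu|^2$ against this yields
\[
\tilde{\bfE}_{2R,R}\ge\frac{1}{n}\tilde{E}'\ge\frac{1}{K_1}\Bigl(\frac{R}{R_0}\Bigr)^{3}\tilde{E}',
\]
the advertised replacement for (\ref{E_R12_E_ineq}), in exact parallel with Remark \ref{E'_rem1} for the ball case. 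Feeding this into the chain (\ref{low_R12_bd})--(\ref{Psi_R12_low}), together with the unchanged energy bound (\ref{e_R12_e_ineq}), produces the stated lower bounds with $\tilde{\bfE}$ replaced by $\tilde{E}'$.

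To read off the locality statements I would then pair the \emph{new} lower bound (numerator, carrying $\tilde{E}'$) with the \emph{unchanged} upper bounds of Theorems \ref{balls_thm} and \ref{shells_thm} (denominator, carrying $\tilde{\bfE}$). In forming the ratios (\ref{time_locality}) and (\ref{space_time_locality}) the two normalizations no longer cancel and leave a factor $\tilde{E}'/\tilde{\bfE}=E'/\bfE$ in the lower bound and $\tilde{\bfE}/\tilde{E}'=\bfE/E'$ in the upper bound, which is precisely the claim. Since $\psi_0=1$ on $B(\bfo,R_0)$ with $0\le\psi_0\le 1$, one has $\tilde{\bfE}\ge\tilde{E}'$, so this factor is $\le 1$ in the lower bound and quantifies the genuine cost of omitting the outer-layer matching.

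The main obstacle is the cross (energy) term. Dropping the boundary conditions shrinks the enstrophy lower bound but leaves the subtracted energy contribution proportional to the full $\tilde{\bfe}=R_0^3\bfe$; factoring $\nu\frac{1}{K_1}(R/R_0)^3\tilde{E}'$ out of (\ref{low_R12_bd}) turns the bracket in (\ref{Psi_R12_low}) into $1-C_0K_1K_2\,(\tilde{\bfe}/\tilde{E}')/R^2$, in which $\tilde{\bfe}/\tilde{E}'=\tau_0^2\,(\bfE/E')\ge\tau_0^2$ by (\ref{tau_obs}), rather than $\tau_0^2$ alone. Thus the passage into the positivity (inertial) range is not quite the verbatim replacement $\tilde{\bfE}\to\tilde{E}'$: one must either invoke $\tilde{\bfE}\ge\tilde{E}'$ to absorb the discrepancy at the price of a weaker constant, or equivalently replace $\tau_0$ by $(\bfe/E')^{1/2}\ge\tau_0$ and correspondingly narrow the inertial range (\ref{inert_range}). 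Making this bookkeeping precise -- so that the final bounds hold with $\tilde{\bfE}\to\tilde{E}'$ on the (possibly shorter) range -- is the one step requiring care; everything else is a direct substitution into the already-established computation.
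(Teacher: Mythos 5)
Your proposal is correct and is essentially the paper's own argument: the entire content of the remark is the replacement of (\ref{E_R12_E_ineq}) by $\tilde{\bfE}_{2R,R}\ge\frac{1}{n}\tilde{E}'\ge\frac{1}{K_1}\bigl(\frac{R}{R_0}\bigr)^{3}\tilde{E}'$, obtained exactly as you do from $\psi_i=1$ on $A(\bfx_i,2R,R)\cap B(\bfo,R_0)$ plus the covering property of $\{A(\bfx_i,2R,R)\}$, with every other ingredient ((\ref{phi_est1})--(\ref{phi_est2}), (\ref{e_R12_e_ineq}), (\ref{tau_obs})) untouched and the factors $E'/\bfE$ and $\bfE/E'$ in (\ref{time_locality})--(\ref{space_time_locality}) arising, as you say, from pairing the new lower bounds with the unchanged upper bounds. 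Your closing caveat about the cross term is a genuine refinement the paper leaves implicit -- since $\tilde{\bfe}=\tau_0^{2}\tilde{\bfE}\ge\tau_0^{2}\tilde{E}'$, the substitution $\tilde{\bfE}\to\tilde{E}'$ in (\ref{Psi_R12_low}) and (\ref{ener_casc2}) is only valid if it is applied to \emph{every} occurrence of the enstrophy, including the one hidden in $\tau_0$ (i.e., $\tau_0\to(\bfe/E')^{1/2}$ with the inertial range correspondingly narrowed) -- though of your two proposed fixes only this second one is viable, because the ratio $\bfE/E'$ admits no universal bound and so cannot be absorbed into a weaker constant.
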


\bibliographystyle{plain}

\bibliography{dg1bib}

\begin{thebibliography}{10}

\bibitem{CKN}
L.~Caffarelli, R.~Kohn, and L.~Nirenberg.
\newblock Partial regularity of suitable weak solutions of the
  {N}avier-{S}tokes equations.
\newblock {\em Comm. Pure Appl. Math.}, 35(6):771--831, 1982.

\bibitem{CCFS}
A.~Cheskidov, P.~Constantin, S.~Friedlander, and R.~Shvydkoy.
\newblock Energy conservation and {O}nsager's conjecture for the {E}uler
  equations.
\newblock {\em Nonlinearity}, 21(6):1233--1252, 2008.

\bibitem{CET}
P.~Constantin, W.~E, and E.~Titi.
\newblock {O}nsager's conjecture on the energy conservation for solutions of
  {E}uler's equations.
\newblock {\em Comm. Math. Phys.}, 165(1):207--209, 1994.

\bibitem{CFbook}
P.~Constantin and C.~Foias.
\newblock {\em Navier-{S}tokes equations}.
\newblock Chicago Lectures in Mathematics. University of Chicago Press,
  Chicago, IL, 1988.

\bibitem{E}
G.~L. Eyink.
\newblock Locality of turbulent cascades.
\newblock {\em Phys. D}, 207(1-2):91--116, 2005.

\bibitem{ES}
G.~L. Eyink and K.~R. Sreenivasan.
\newblock Onsager and the theory of hydrodynamic turbulence.
\newblock {\em Rev. Mod. Phys.}, 78(1):87--135, 2006.

\bibitem{FMRTbook}
C.~Foias, O.~Manley, R.~Rosa, and R.~Temam.
\newblock {\em Navier-{S}tokes equations and turbulence}, volume~83 of {\em
  Encyclopedia of Mathematics and its Applications}.
\newblock Cambridge University Press, Cambridge, 2001.

\bibitem{FMRT}
C.~Foias, O.~P. Manley, R.~M.~S. Rosa, and R.~Temam.
\newblock Estimates for the energy cascade in three-dimensional turbulent
  flows.
\newblock {\em C. R. Acad. Sci. Paris S\'er. I Math.}, 333(5):499--504, 2001.

\bibitem{Fbook}
U.~Frisch.
\newblock {\em Turbulence}.
\newblock Cambridge University Press, Cambridge, 1995.
\newblock The legacy of A. N. Kolmogorov.

\bibitem{Kol3}
A.~N. Kolmogorov.
\newblock Dissipation of energy in the locally isotropic turbulence.
\newblock {\em Dokl. Akad. Nauk SSSR}, 32:16--18, 1941.

\bibitem{Kol1}
A.~N. Kolmogorov.
\newblock The local structure of turbulence in incompressible viscous fluid for
  very large {R}eynolds numbers.
\newblock {\em Dokl. Akad. Nauk SSSR}, 30:9--13, 1941.

\bibitem{Kol2}
A.~N. Kolmogorov.
\newblock On generation of isotropic turbulence in an incompressible viscous
  liquid.
\newblock {\em Dokl. Akad. Nauk SSSR}, 31:538--540, 1941.

\bibitem{Kr}
R.H. Kraichnan.
\newblock Inertial-range transfer in two- and three-dimensional turbulence.
\newblock {\em J, Fluid Mech.}, 47:525--535, 1971.

\bibitem{L-R}
P.~G. Lemari{\'e}-Rieusset.
\newblock {\em Recent developments in the {N}avier-{S}tokes problem}, volume
  431 of {\em Chapman \& Hall/CRC Research Notes in Mathematics}.
\newblock Chapman \& Hall/CRC, Boca Raton, FL, 2002.

\bibitem{LF}
V.~L'vov and G.~Falkovich.
\newblock Counterbalanced interaction locality of developed hydrodynamic
  turbulence.
\newblock {\em Phys. Rev. A}, 46(8):4762--4772, 1992.

\bibitem{O}
L.~Onsager.
\newblock Statistical hydrodynamics.
\newblock {\em Nuovo Cimento (9)}, 6(Supplemento, 2(Convegno Internazionale di
  Meccanica Statistica)):279--287, 1949.

\bibitem{S}
V.~Scheffer.
\newblock Hausdorff measure and the {N}avier-{S}tokes equations.
\newblock {\em Comm. Math. Phys.}, 55(2):97--112, 1977.

\bibitem{Tbook1}
R.~Temam.
\newblock {\em Navier-{S}tokes equations}.
\newblock AMS Chelsea Publishing, Providence, RI, 2001.
\newblock Theory and numerical analysis, Reprint of the 1984 edition.

\end{thebibliography}

\end{document}